\documentclass[a4paper, 11pt, english]{article}
\usepackage[utf8]{inputenc}
\usepackage[T1]{fontenc}
\usepackage{babel}
\usepackage{graphicx}
\usepackage{stmaryrd}
\usepackage[a4paper]{geometry}
\usepackage{amsmath,amsfonts,amssymb,amsthm,epsfig,epstopdf,url,array}
\usepackage{rotating}
\usepackage[colorlinks=true,citecolor=red,linkcolor=blue,pdfpagetransition=Blinds]{hyperref}
\usepackage{cleveref}
\usepackage{nameref}
\usepackage{enumitem}
\usepackage{comment}
\Crefname{paragraph}{Section}{Sections}
\usepackage{fancyhdr}
\usepackage{tikz}
\usepackage{subcaption}

\usepackage{fullpage}
\usepackage{bbm}

\newcommand{\ensemblenombre}[1]{\mathbb{#1}}
\newcommand{\N}{\ensemblenombre{N}}

\newcommand{\R}{} 
\renewcommand{\R}{\ensemblenombre{R}}
\newcommand{\C}{\ensemblenombre{C}}

\renewcommand{\leq}{\leqslant}
\renewcommand{\geq}{\geqslant}
\renewcommand{\le}{\leqslant}
\renewcommand{\ge}{\geqslant}

\newcommand{\dive}[1]{\mathrm{div}}
\renewcommand{\epsilon}{\varepsilon}

\theoremstyle{plain} 
\newtheorem{proposition}{Proposition}[section] 
\newtheorem{theorem}[proposition]{Theorem}
\newtheorem{lemma}[proposition]{Lemma}
\newtheorem{cor}[proposition]{Corollary}
\theoremstyle{definition}
\newtheorem{definition}[proposition]{Definition}
\newtheorem{remark}[proposition]{Remark}

\numberwithin{equation}{section}

\makeatletter
\let\original@addcontentsline\addcontentsline
\newcommand{\dummy@addcontentsline}[3]{}
\newcommand{\DeactivateToc}{\let\addcontentsline\dummy@addcontentsline}
\newcommand{\ActivateToc}{\let\addcontentsline\original@addcontentsline}
\makeatother

\newcommand{\be}{\begin{equation} \label}
	\newcommand{\ee}{\end{equation}}

\newcommand{\ba}{\begin{align} \label}
	\newcommand{\ea}{\end{align}}

\newcommand{\bt}{\begin{theorem} \label}
	\newcommand{\et}{\end{theorem}}

\newcommand{\bl}{\begin{lm} \label}
	\newcommand{\el}{\end{lm}}

\newcommand{\bc}{\begin{cor} \label}
	\newcommand{\ec}{\end{cor}}

\newcommand{\bp}{\begin{pr} \label}
	\newcommand{\ep}{\end{pr}}

\usepackage{graphicx}

\begin{document}

	\title{Application of uncertainty principles for decaying densities to the  observability of the Schrödinger equation}
	\author{Kévin Le Balc'h, Jiaqi Yu}
	
	\maketitle
	
\begin{abstract}
In this article, we study the Schrödinger equation posed in the Euclidean space. We prove observability inequalities for measurable sets that are thick with respect to decaying densities. The proof relies on quantitative uncertainty principles adapted to decaying densities, notably those established by Shubin, Vakilian, Wolff, and Kovrijkine.
\end{abstract}

	
	\section{Introduction}
	
	\subsection{Observability estimates of the Schrödinger equation}
	
	For $d\geq 1$, let us consider the free Schrödinger equation
	\begin{equation}
		\label{eq:SchrodingerObs}
		\left\{
		\begin{array}{ll}
			i  \partial_t u  +  \Delta u = 0 & \text{ in }  (0,+\infty) \times \R^d, \\
			u(0, \cdot) = u_0 & \text{ in } \R^d,
		\end{array}
		\right.
	\end{equation}
	where $u_0 \in L^2(\R^d)$. It is a well-known fact that $i \Delta$ generates a continuous unitary semigroup in $L^2(\R^d)$ denoted by $(e^{i t \Delta})_{t \in \R}$. 
	
	\medskip

	We start with the following definition.
	\begin{definition}[Observability inequalities]
		${}$
		\begin{itemize}
			\item For $T>0$ and $\mathcal{O} \subset \R^d$ a measurable set, we say that \eqref{eq:SchrodingerObs} is observable in $(0,T) \times \mathcal{O}$ if there exists a positive constant $C>0$ such that
			\be{eq:obs}
			\|u_0\|_{L^2(\R^d)}^2 \leq C \int_0^T \int_{\mathcal{O}} |e^{it \Delta } u_0|^2 dx dt\qquad \forall u_0 \in L^2(\R^d).
			\ee
			\item For $T>S>0$ and $\mathcal{O}_1, \mathcal{O}_2 \subset \R^d$ two measurable sets, we say that \eqref{eq:SchrodingerObs} is observable in $(\{T\} \times \mathcal{O}_1) \cup (\{S\} \times \mathcal{O}_2)$ if there exists a positive constant $C>0$ such that
			\be{eq:obstwopoints}
			\|u_0\|_{L^2(\R^d)}^2 \leq C \left(\int_{\mathcal{O}_1} |e^{iT \Delta} u_0|^2 dx + \int_{\mathcal{O}_2} |e^{iS \Delta} u_0|^2 dx \right)\qquad \forall u_0 \in L^2(\R^d).
			\ee
		\end{itemize}
	\end{definition}
	
	Note that in \eqref{eq:obs} the observation appears on the product set $(0,T)\times \mathcal{O}$, this is the usual definition of observability.  The observability at two-time points \eqref{eq:obstwopoints} has been introduced in \cite{WangWangZhang2019}, the observation here appears on two differents sets $\{T\} \times \mathcal{O}_1$ and $\{S\} \times \mathcal{O}_2$.  The two notions model the possibility of estimating the (whole) $L^2(\R^d)$-norm of the initial wave function $u(0,\cdot)=u_0$ by some restricted norm of the wave function $u=u(t,z)$. For instance if the initial datum is normalized, $\|u_0\|_{L^2(\R^d)}=1$, the expression $\int_{\mathcal{O}} |e^{it \Delta} u_0|^2 dx$ is the probability of finding the particle in the set $\mathcal{O}$ at time $t$. Having $\int_0^T \int_{\mathcal{O}} |e^{it \Delta} u_0|^2 dx dt \geq C^{-1} > 0$ for all solutions of \eqref{eq:SchrodingerObs} means that every quantum particle spends a positive fraction of time of the interval $(0,T)$ in the set $\mathcal{O}$. On the other hand, having $\int_{\mathcal{O}_1} |e^{iT \Delta} u_0|^2 dx + \int_{\mathcal{O}_2} |e^{iS \Delta} u_0|^2 dx \geq C^{-1}$ means that every quantum particle visits either $\mathcal{O}_1$ at time $T$, or $\mathcal{O}_2$ at time $S$.\\

	The goal of the article is to provide sufficient conditions of observability of the Schrödinger equation \eqref{eq:SchrodingerObs} based on uncertainty principles for decaying densities.

	\subsection{Main results}
	
Let us introduce the following definition.
\begin{definition}[Thick set with respect to a density]
		Let $\gamma \in (0,1]$, and $\rho: \mathbb{R}^d \to (0, \infty)$ be a continous positive function.
		A set $\mathcal{O} \subset \mathbb{R}^d$ is said to be \emph{$(\gamma, \rho)$-thick} if
\begin{equation}
\label{eq:rhothick}
|\mathcal{O} \cap B(x,  \rho(x))| \ge \gamma \, |B(x,  \rho(x))| \qquad \forall x \in \mathbb{R}^d,
\end{equation}
where $|\cdot|$ stands for the Lebesgue measure in $\R^d$. 
	\end{definition}
A set $\mathcal{O} \subset \mathbb{R}^d$ is said to be thick if there exist $\gamma \in (0,1)$ and $a>0$ such that $\mathcal{O}$ is $(\gamma, a)$-thick.

\medskip

Our first two main results focus on the observability notion defined in \eqref{eq:obs}.
	\begin{theorem}
	\label{tm:mainresult1} 
	Let $\rho : \R^d \to (0,+\infty)$ be a continuous positive function such that 
	\begin{equation}
	\label{eq:cond1}
	\rho(x) = \mathcal{O}(|x|^{-1})\ \text{as}\ |x| \to \infty.
\end{equation}			
There exists $\gamma \in (0,1)$ such that if $\mathcal{O} \subset \R^d$ is a measurable $(\gamma, \rho)$-thick set, then the Schrödinger equation \eqref{eq:SchrodingerObs} is observable in $(0,T)\times\mathcal{O}$ for some time $T>0$. More precisely, there exist $T_0>0$ and $C>0$ such that for every $T \geq T_0$, \eqref{eq:obs} holds.
\end{theorem}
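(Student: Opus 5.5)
Our plan is to reduce observability on $(0,T)\times\mathcal{O}$ to an uncertainty principle of Shubin--Vakilian--Wolff type for densities $\rho(x)\lesssim \langle x\rangle^{-1}$. The reduction uses the pseudo-conformal (lens) transform, which converts the Schrödinger evolution at time $t$ into a Fourier transform. Recall the explicit formula
\[
e^{it\Delta}u_0(x)=\frac{1}{(4\pi i t)^{d/2}}e^{i|x|^2/(4t)}\int_{\R^d} e^{-ix\cdot y/(2t)} e^{i|y|^2/(4t)}u_0(y)\,dy .
\]
Hence $|e^{it\Delta}u_0(x)| = (2t)^{-d/2}\,|\widehat{v_t}(x/(2t))|$, where $v_t=e^{i|\cdot|^2/(4t)}u_0$ and $\widehat{\cdot}$ is the unitary Fourier transform. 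Note $\|v_t\|_{L^2}=\|u_0\|_{L^2}$ and $|v_t|=|u_0|$.

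\textbf{Step 1: the uncertainty principle.} The first step is to establish (or invoke, in the form proved by Shubin--Vakilian--Wolff) the following statement. There exists $\gamma\in(0,1)$ such that, if $E$ is $(\gamma,\rho)$-thick with $\rho(x)\le c\langle x\rangle^{-1}$, then for every measurable $F$ that is $(\gamma,\rho)$-thick one has
\[
\|f\|_{L^2}^2\le C\big(\|f\|_{L^2(E)}^2+\|\hat f\|_{L^2(F)}^2\big).
\]
Equivalently, the complements $E^c$ and $F^c$ form an annihilating pair. The argument is a Kovrijkine-type covering by the balls $B(x,\rho(x))$ combined with Bernstein/Nazarov--Turán estimates for functions whose Fourier transform is localized. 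It is essentially the Shubin--Vakilian--Wolff theorem, valid when $\gamma$ is close enough to $1$; this is why Theorem~\ref{tm:mainresult1} asserts only the existence of some $\gamma$.

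\textbf{Step 2: rescaling the thick set.} Next we apply Step 1 with $E=\mathcal{O}$ on the physical side. For the Fourier side, $|e^{it\Delta}u_0|^2$ restricted to $\mathcal{O}$ equals $|\widehat{v_t}|^2$ restricted to $\mathcal{O}/(2t)$, up to the Jacobian. The point to check is that $\mathcal{O}/(2t)$ is $(\gamma,\tilde\rho)$-thick with $\tilde\rho(\xi)=\rho(2t\xi)/(2t)\lesssim (2t)^{-1}\langle 2t\xi\rangle^{-1}\lesssim \langle\xi\rangle^{-1}$ uniformly for $t\ge 1$. So the rescaled set keeps the $\langle x\rangle^{-1}$ decay class with uniform constants, and it is here that the hypothesis \eqref{eq:cond1} is used. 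This yields, uniformly in $t\ge1$,
\[
\|u_0\|^2\le C\big(\|u_0\|_{L^2(\mathcal{O})}^2+\|e^{it\Delta}u_0\|_{L^2(\mathcal{O})}^2\big),
\]
using that $|v_t|=|u_0|$.

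\textbf{Step 3: integrating in time.} Apply Step 2 with $u_0$ replaced by $e^{is\Delta}u_0$ and $t=T/2$, for $s\in(0,T/2)$. Integrating over $s$ gives
\[
\tfrac{T}{2}\|u_0\|^2\le C\int_0^{T/2}\|e^{is\Delta}u_0\|^2_{L^2(\mathcal{O})}ds+C\int_{T/2}^{T}\|e^{is\Delta}u_0\|^2_{L^2(\mathcal{O})}ds.
\]
This is \eqref{eq:obs} for every $T\ge T_0=2$, with the constant even improving in $T$.

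\textbf{Main obstacle.} The main obstacle is Step 1, i.e.\ the uncertainty principle with decaying densities. It requires delicate local estimates: on balls of radius $\rho(x)\sim|x|^{-1}$, one must control $f$ in terms of its restriction to a set of relative density $\gamma$. This holds only for functions approximately band-limited at scale $|x|$, which forces the dyadic decomposition and the restriction on $\gamma$. If this result is stated earlier in the paper as a quoted theorem, I would simply invoke it. Otherwise I would first try proving it by the iterative argument of Shubin--Vakilian--Wolff, with Kovrijkine's quantitative analytic estimates on each dyadic annulus.
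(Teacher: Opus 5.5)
Your argument is correct, but it is not the proof the paper gives for \Cref{tm:mainresult1}. The paper goes through a resolvent estimate. It uses \Cref{prop-resolvent} with $\alpha=1$, so $M(\lambda)\equiv 1$, obtained by applying \Cref{thm:kov2003} with the complement of the thin spectral shell $A_{\lambda,1}$ (\Cref{lem:thinness}) as Fourier-side set. It completes this at low frequencies with the Logvinenko--Sereda/Kovrijkine spectral inequality, and then applies Miller's criterion \Cref{prop:Miller}, which gives an unspecified $T_0$.

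You instead use the explicit kernel of $e^{it\Delta}$ to turn the observation at time $t$ into an observation of $\widehat{v_t}$ on $\mathcal O/(2t)$. This is a self-contained proof of the relevant direction of \Cref{0309-sch-eq-twopoints}. \Cref{thm:kov2003} then directly yields the two-instant estimate of \Cref{thm:mainresult3} with $\mathcal O_1=\mathcal O_2=\mathcal O$ and $\alpha=1$, which you average in time. This is exactly the alternative sketched in the first remark after \Cref{thm:mainresult3}.

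Your route is shorter and dispenses with Miller's theorem and the low-frequency step. It gives an explicit $T_0$, a constant of order $1/T$, and the stronger two-instant observability. Your observation that $\rho(2t\,\cdot)/(2t)\le c\langle\cdot\rangle^{-1}$ for $t\ge 1/2$ also shows that the $T$-dependence of $\gamma$ mentioned in that remark only concerns short times. For the statement as posed you do not even need uniformity in $t$: fixing $t=1$ and integrating over $s\in(0,1)$ already gives \eqref{eq:obs} on $(0,2)$, hence for every $T\ge 2$. The resolvent method, on the other hand, never uses the explicit propagator. That is what lets the paper treat $\alpha>1$ at every $T>0$ with $\gamma$ independent of $T$ (via \Cref{thm:resolv-obs}), the fractional equation, and bounded potentials (\Cref{sec:V}), where no such Fourier representation exists.

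One point in your Step 1 needs care. \Cref{thm:kov2003} is stated for fixed radial densities. You need it for $\mathcal O$ thick with respect to an arbitrary continuous $\rho\le c\langle x\rangle^{-1}$, and for $\mathcal O/(2t)$ thick with respect to $\rho(2t\,\cdot)/(2t)$. So you must transfer thickness to a reference density $R(x)=K\min(1,|x|^{-1})$. Since the admissible $\gamma$ is close to $1$, the loss has to fall on $1-\gamma$, not on $\gamma$; a degradation $\gamma\mapsto\gamma/9^d$ as in \Cref{lem:comparaisondensity} would not do.

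This is easy to arrange. For fixed $x$, the balls $B(y,\rho(y))$ with $y\in\mathcal O^c\cap B(x,R(x))$ admit a Besicovitch subcover of that set with overlap at most $N_d$. Since $\rho(y)\lesssim R(x)$ there, you get $|\mathcal O^c\cap B(x,R(x))|\le C(1-\gamma)\,|B(x,R(x))|$ with $C=C(d,c,K)$. The same bound holds for $\mathcal O/(2t)$ uniformly in $t\ge 1/2$. Then choose $\gamma$ so that $1-C(1-\gamma)$ exceeds the threshold that \Cref{thm:kov2003} provides for $\rho_1=\rho_2=R$.
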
	
\begin{theorem}
\label{tm:mainresult2} 
Let $\alpha >1$ and $\rho : \R^d \to (0,+\infty)$ be a continuous positive function such that 
	\begin{equation}
		\label{eq:cond2}
	\rho(x) = \mathcal{O}(|x|^{-\alpha})\ \text{as}\ |x| \to \infty.
\end{equation}	 
There exists $\gamma \in (0,1)$ such that if $\mathcal{O} \subset \R^d$ is a measurable $( \gamma, \rho)$-thick set, then the Schrödinger equation \eqref{eq:SchrodingerObs} is observable in $(0,T)\times\mathcal{O}$ for every time $T>0$. More precisely, there exists $C>0$ such that for every $T>0$,
\begin{equation}\label{equ-obs-schrothm2}
			\|u_0\|_{L^2(\R^d)}^2\le Ce^{\frac{C}{T^2}}\int_0^T\int_{\mathcal{O}}|e^{it\Delta}u_0|^2dxdt\qquad \forall u_0\in L^2(\R^d).
		\end{equation}
\end{theorem}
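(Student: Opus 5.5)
We reduce Theorem~\ref{tm:mainresult2} to an uncertainty principle for decaying densities via the Fourier-transform structure of the free Schrödinger group. The starting point is the classical factorization
\[
e^{it\Delta}u_0(x) = \frac{1}{(4\pi i t)^{d/2}} e^{\frac{i|x|^2}{4t}} \int_{\R^d} e^{-\frac{i x\cdot y}{2t}} e^{\frac{i|y|^2}{4t}} u_0(y)\,dy ,
\]
so that $|e^{it\Delta}u_0(x)| = c_d\, t^{-d/2} |\widehat{v_t}(x/(2t))|$ with $v_t = e^{i|\cdot|^2/(4t)}u_0$, $\|v_t\|_{L^2}=\|u_0\|_{L^2}$. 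Hence $\int_{\mathcal O}|e^{it\Delta}u_0|^2dx$ equals, up to normalization, $\int_{\mathcal{O}/(2t)}|\widehat{v_t}|^2$. The set $\mathcal O/(2t)$ is $(\gamma,\rho_t)$-thick with $\rho_t(\xi)=\rho(2t\xi)/(2t)$, and by \eqref{eq:cond2} $\rho_t(\xi)\lesssim t^{-1-\alpha}|\xi|^{-\alpha}$ for large $|\xi|$. Thus at each fixed time the observation reduces to a weighted Logvinenko--Sereda type statement: control of $\|\hat f\|_{L^2}$ by its restriction to a set thick with respect to a density decaying like $|\xi|^{-\alpha}$, $\alpha>1$.

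The key tool is the uncertainty principle of Shubin--Vakilian--Wolff / Kovrijkine type for decaying densities: if $\Omega$ is $(\gamma,\rho)$-thick with $\rho(x)\lesssim |x|^{-\alpha}$, $\alpha>1$ (more generally for slowly varying $\rho$ with sufficient decay), then for $\gamma$ close enough to $1$ one has $\|g\|_{L^2}\le C\|g\|_{L^2(\Omega)}$ for all $g\in L^2$ whose Fourier transform is compactly supported, or more robustly a spectral inequality $\|g\|\le Ce^{C N}\|g\|_{L^2(\Omega)}$ for $\operatorname{supp}\hat g\subset B(0,N)$. Since we cannot assume $u_0$ band-limited, the natural route is the two-step strategy used for Schrödinger observability from thick sets: (i) prove a spectral/uncertainty estimate valid for functions localized in phase space, and (ii) exploit that the Schrödinger flow maps position localization to frequency localization via the factorization above. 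Concretely, I would integrate in time and use the estimate at two scales: $\int_0^T\int_{\mathcal O}|e^{it\Delta}u_0|^2$ dominates an average over $t\in(T/2,T)$ of $\|\widehat{v_t}\|^2_{L^2(\mathcal O/(2t))}$, and the decay $\alpha>1$ ensures $\rho_t$ is integrable in the radial direction, which is exactly the condition under which the weighted uncertainty principle holds without band-limitation (the balls $B(\xi,\rho_t(\xi))$ shrink fast enough that a Kovrijkine-type analytic argument with constants summable over dyadic annuli works). The cost $e^{C/T^2}$ should come from the scaling $\rho_t\sim t^{-1-\alpha}$: the relevant analyticity/Bernstein constants grow like $\exp(C\,t^{-2})$ after optimizing over the Gaussian regularization.

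In order, I would: (1) establish the factorization and the scaling of thickness; (2) prove a quantitative uncertainty principle $\|f\|_{L^2}^2\le C_1(\|f\|^2_{L^2(E)}+\|\hat f\|^2_{L^2(F)})$ for $E$ thick w.r.t.\ a density $\lesssim|x|^{-\alpha}$ and $F$ a large ball complement/thick set, with explicit constant dependence; (3) apply it with $f$ a time-averaged or Gaussian-localized version of $e^{it\Delta}u_0$, absorbing high-frequency terms using that high frequencies translate into large spatial displacement under the flow, where the density is tiny and thickness still holds. The main obstacle is step (2)–(3): getting an unconditional estimate (not requiring compact Fourier support) with a constant behaving like $e^{C/T^2}$; I would first try to cover $\R^d$ by the balls $B(x,\rho(x))$, apply a local Kovrijkine/Remez inequality on each after convolving with a Gaussian of width $\sim T$, and sum, using $\alpha>1$ to ensure the bad set of non-analytic balls has controlled measure, choosing $\gamma$ close to $1$ to close the absorption.
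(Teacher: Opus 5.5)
\textbf{There is a genuine gap: the one concrete mechanism in your plan is false.} You claim that, because $\alpha>1$ makes $\rho_t$ radially integrable, a weighted uncertainty principle holds ``without band-limitation''. In other words, $\|\widehat{v_t}\|_{L^2(\mathcal O/(2t))}$, equivalently $\|e^{it\Delta}u_0\|_{L^2(\mathcal O)}$, would control $\|u_0\|_{L^2}$, and you then average over $t\in(T/2,T)$.

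No single-set estimate of this kind can hold as soon as $|\mathcal O^c|>0$, whatever the decay of $\rho$. Note that $(\gamma,\rho)$-thick sets with $\gamma<1$ can even have complements of infinite measure. Take $K\subset\mathcal O^c$ with $0<|K|<\infty$; the datum $u_0=e^{-it\Delta}\mathbbm{1}_K$ gives $\|e^{it\Delta}u_0\|_{L^2(\mathcal O)}=0$.

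Consequences for the rest of your plan:
\begin{itemize}
\item Fixed-time observation cannot control $\|u_0\|$. The averaged inequality you want is just the theorem restated.
\item Steps (2)--(3) never supply a mechanism that exploits the time integral. This includes the Gaussian localization, ``high frequencies move to where the density is tiny'', summation over dyadic annuli, and taking $\gamma$ close to $1$ to absorb.
\item The origin you give for the cost $e^{C/T^2}$, via the scaling $\rho_t\sim t^{-1-\alpha}$, is unsupported.
\item The honest use of your factorization is to pair frequency-side information at one time with physical-side information at another time, and invoke Kovrijkine's two-set principle. That is the two-time-point route of \Cref{0309-sch-eq-twopoints} and \Cref{thm:mainresult3}. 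There, however, the admissible thickness parameter depends on the time gap. It therefore does not give one $\gamma$ valid for all $T>0$, nor the bound $Ce^{C/T^2}$.
\end{itemize}

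\textbf{What is missing: the stationary (quasimode) reduction and the dual density on the frequency side.}

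By \Cref{thm:resolv-obs} (the Su--Sun--Yuan argument), it suffices to prove a high-frequency resolvent estimate: for $\lambda\ge\lambda_0$,
$\|f\|^2\le M(\lambda)\|(-\Delta-\lambda)f\|^2+m\|f\|^2_{L^2(\mathcal O)}$ with $M(\lambda)\to0$. The low frequencies are handled by observability of the heat equation, which needs only ordinary thickness. The FBI-transform step (with $h\sim T^4$) is what produces $e^{C/T^2}$.

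The resolvent estimate (\Cref{prop-resolvent}) is where the decaying density actually enters. One splits $\hat f$ along the annulus $A_{\lambda,1/\alpha}=\{\xi:\ ||\xi|-\sqrt\lambda|\le\lambda^{-1/(2\alpha)}\}$.
\begin{itemize}
\item Off the annulus, $||\xi|^2-\lambda|\ge\lambda^{(1-1/\alpha)/2}$. That part therefore costs $\lambda^{-(1-1/\alpha)}\|(-\Delta-\lambda)f\|^2$.
\item On the annulus, \Cref{lem:thinness} shows that $A_{\lambda,1/\alpha}$ is $\varepsilon$-thin with respect to the \emph{dual} density $\tilde\rho_{1/\alpha}(\xi)\sim|\xi|^{-1/\alpha}$. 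Its complement is thus $(1-\varepsilon,\tilde\rho_{1/\alpha})$-thick. Kovrijkine's principle (\Cref{thm:kov2003}) for the pair $(\rho_\alpha,\tilde\rho_{1/\alpha})$ then gives $\|P_\lambda f\|\lesssim\|P_\lambda f\|_{L^2(\mathcal O)}$.
\end{itemize}

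The hypothesis $\alpha>1$ is used only to make $M(\lambda)=\lambda^{-(1-1/\alpha)}\to0$, not through any integrability of $\rho$. For $\alpha=1$ one gets merely large-time observability via \Cref{prop:Miller}.
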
	

\textbf{Bibliographical comments.} In the one-dimensional case, i.e. $d=1$, it has been shown in \cite{MartinPravdaStarov2020} and \cite{HuangWangWang2022} that the free Schrödinger equation \eqref{eq:SchrodingerObs} is observable in $(0,T)\times \mathcal O$ at some time $T>0$ if and only if $\mathcal{O}$ is thick ($\rho(x) \equiv a >0$). The observability at every time $T>0$ has been recently obtained in \cite{SSY23}, always assuming that $\mathcal{O}$ is a thick set. In the multi-dimensional case, i.e. $d \geq 2$, only partial results have been obtained for identifying a necessary and sufficient condition on $\mathcal{O}$ for the observability \eqref{eq:obs} to hold. Let us sum up the known results. It has first been shown in \cite{MartinPravdaStarov2020} that the thickness property is a necessary condition for \eqref{eq:obs}. The other cases concern sufficient condition. From \cite{BJ16}, \cite{Tau22}, one can deduce that \eqref{eq:SchrodingerObs} is observable in $(0,T)\times \mathcal{O}$ for every time $T>0$ if $\mathcal{O}$ is an open set satisfying the so-called Geometric Control Condition (GCC). This condition roughly tells that every infinite line intersects $\mathcal{O}$. In \cite{WangWangZhang2019}, the authors have proved that if $\mathcal{O} \subset \R^d$ is such that $|\mathcal{O}^c| < +\infty$  then \eqref{eq:SchrodingerObs} is observable in $(0,T)\times \mathcal{O}$ for every time $T>0$. In \cite{Wun17}, \cite{Tau22}, the authors prove that if $\mathcal{O} \subset \R^d$ is a periodic open subset of $\R^d$, then \eqref{eq:SchrodingerObs} is observable in $(0,T)\times \mathcal{O}$ for every time $T>0$. A generalization to periodic measurable subset in $\R^2$ has further been obtained in \cite{LBM23}. Let us also mention that for Schrödinger equations with a confining potential, \cite{Pro25} characterizes open sets for which observability holds.

\medskip From the previous discussion, we deduce that our main results from \Cref{tm:mainresult1} and \Cref{tm:mainresult2} only provide sufficient conditions for the observability \eqref{eq:obs} to hold. They are not sharp as the one-dimensional situation tells us. It is however interesting to compare these conditions with the other exhibited conditions in the litterature. First, the thickness condition with respect to $\rho$, with $\rho$ satisfying either \eqref{eq:cond1} or \eqref{eq:cond2} imply the usual thickness condition (constant density). This is in adequation with the fact that the thickness property is a necessary condition for \eqref{eq:obs} to hold. On the other hand, it is interesting to remark that the observation set $\mathcal{O} = \{(x,y) \in \R^2 \ ;\ |x y | > C\}$ is a thick set with respect to $\rho(x) = \min(1, |x|^{-1})$ but $\mathcal{O}$ does not satisfy (GCC), neither $|\mathcal{O}^c| < +\infty$. Conversely, one can also construct a set satisfying both (GCC) and $|\mathcal{O}^c| < +\infty$, but does not satisfy the thickness property \eqref{eq:rhothick} with respect to $\rho(x)$ satisfying \eqref{eq:cond1}. Indeed, take $\mathcal{O} = \R^d \setminus \left(\cup_{k \in \N} B(x_k, |x_k|^{-1/2})\right)$ where $x_k = 2^{k} e_1$.

\bigskip

Our third main result focus on the observability notion at two times points defined in \eqref{eq:obstwopoints}.
	\begin{theorem}
		\label{thm:mainresult3}
		Let $\alpha>0$, $\rho_{\alpha}$ and $\rho_{1/\alpha}$ be two continuous positive functions such that 
\begin{equation}
\rho_{\alpha}(x) = \mathcal{O}(|x|^{-\alpha}),\ \rho_{1/\alpha}(x) = \mathcal{O}(|x^{-1/\alpha}|)\ \text{as}\ |x| \to \infty.
\end{equation}		
For any $T>S>0$,
		there exists $\gamma=\gamma(T-S) \in (0,1)$ such that if $\mathcal{O}_1 \subset \R^d$ is a measurable $( \gamma, \rho_{\alpha})$ thick set and $\mathcal{O}_2 \subset \R^d$ is a measurable $( \gamma, \rho_{1/\alpha})$ thick set, then the Schrödinger equation \eqref{eq:SchrodingerObs} is observable in $(\{T\} \times \mathcal{O}_1) \cup (\{S\} \times \mathcal{O}_2)$. 
	\end{theorem}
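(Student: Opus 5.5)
The plan is to reduce Theorem~\ref{thm:mainresult3} to a Logvinenko--Sereda/Kovrijkine-type uncertainty principle for a pair of sets. The first set is $(\gamma,\rho_\alpha)$-thick in space. The second is $(\gamma,\rho_{1/\alpha})$-thick in frequency.

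The reduction uses the factorization of the free propagator,
\[
e^{it\Delta} = M_t D_t \mathcal{F} M_t,
\]
where $M_t$ is multiplication by $e^{i|x|^2/(4t)}$, $D_t$ is the $L^2$-unitary dilation $f\mapsto (2t)^{-d/2} f(\cdot/(2t))$, and $\mathcal{F}$ is the Fourier transform (up to harmless constants).

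Set $v = e^{iS\Delta}u_0$ and $\tau = T-S>0$, so that $e^{iT\Delta}u_0 = e^{i\tau\Delta} v$. Then
\[
|e^{iT\Delta}u_0(x)| = (2\tau)^{-d/2}\,\bigl|\widehat{w}\bigl(x/(2\tau)\bigr)\bigr|, \qquad w = M_\tau v,\quad |w| = |v|.
\]
This gives the two observation terms in a common form:
\begin{itemize}
\item the $\mathcal{O}_2$-term is $\int_{\mathcal{O}_2}|w|^2\,dx$;
\item the $\mathcal{O}_1$-term is $\int_{\mathcal{O}_1/(2\tau)} |\widehat w|^2\,d\xi$.
\end{itemize}
Moreover $\|u_0\|_{L^2} = \|w\|_{L^2}$. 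The claim therefore becomes the uncertainty principle
\[
\|w\|_{L^2}^2 \le C\Bigl(\|w\|^2_{L^2(E)} + \|\widehat w\|^2_{L^2(\Sigma)}\Bigr) \qquad \forall w \in L^2(\R^d),
\]
with $E=\mathcal{O}_2$ and $\Sigma = \mathcal{O}_1/(2\tau)$. The rescaled set $\Sigma$ is $(\gamma,\tilde\rho)$-thick with $\tilde\rho(\xi) = \rho_\alpha(2\tau\xi)/(2\tau) = O(|\xi|^{-\alpha})$, with constants depending on $\tau$. This dependence is why $\gamma$ depends on $T-S$.

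It remains to prove this uncertainty principle for $E$ thick with density $O(|x|^{-1/\alpha})$ and $\Sigma$ thick with density $O(|\xi|^{-\alpha})$. This is exactly the regime $\rho_E\,\rho_\Sigma \lesssim (|x||\xi|)^{-\dots}$ covered by the Shubin--Vakilian--Wolff result on annihilating pairs for decaying densities. I would invoke it in the form: if $E$ is $(\gamma,\rho_1)$-thick and $\Sigma$ is $(\gamma,\rho_2)$-thick with $\rho_1(x)=O(|x|^{-a})$, $\rho_2(\xi)=O(|\xi|^{-b})$ and $ab = 1$ (here $a=1/\alpha$, $b=\alpha$), then for $\gamma$ close enough to $1$ the pair $(E,\Sigma)$ is strongly annihilating.

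If this result is not available in that form, I would reprove it by the standard scheme, which has two steps.

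\textbf{Step 1 (localization in both variables).} Choose a large radius $R$ and split
\[
w = P_{\le R} w + P_{>R} w ,
\]
a frequency localization to $|\xi|\le R$ plus the remainder. On $\{|\xi|>R\}$, $\Sigma$ has density at most $cR^{-\alpha}$, so it fills a fraction $\ge\gamma$ of every small ball there. A Wolff-type local estimate then controls $\|\widehat w\|_{L^2(|\xi|>R)}$ by $\|\widehat w\|_{L^2(\Sigma)}$ plus a small multiple of $\|w\|$. Symmetrically, $E$ controls the spatial region $|x|>R^{\alpha}$.

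\textbf{Step 2 (central region).} The remaining piece is essentially localized in $|x|\lesssim R^{\alpha}$ and $|\xi|\lesssim R$. At scale $|x|\sim R^{\alpha}$, the density $\rho_E \sim R^{-1}$ matches the inverse bandwidth. Kovrijkine's estimate on balls of radius $\sim R^{-1}$ therefore applies with constant $(C/\gamma)^{C}$ uniform in $R$. Choosing $\gamma$ close to $1$ makes the error terms absorbable.

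The main obstacle is Step 1. Controlling the high-frequency tail by $\Sigma$ is not a Paley--Wiener statement, since $w$ has no compact support. I would follow Shubin--Vakilian--Wolff: use a dyadic decomposition into annuli, apply on each annulus a local uncertainty estimate at scale $\rho$ via smooth wave packets, and sum with the $\gamma$-closeness absorbing the losses.
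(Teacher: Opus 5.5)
Your argument is the paper's: the factorization $e^{i\tau\Delta}=M_\tau D_\tau\mathcal{F}M_\tau$ is exactly how the equivalence in \Cref{0309-sch-eq-twopoints} (cited by the paper and applied at time $T-S$) is proved. The uncertainty principle you then need, for a set thick with density $O(|x|^{-1/\alpha})$ in space and a set thick with density $O(|\xi|^{-\alpha})$ in frequency after rescaling by $2(T-S)$, is precisely \Cref{thm:kov2003}, whose hypothesis \eqref{eq:generalkov} holds because the two exponents multiply to $1$; that form is due to Kovrijkine (Shubin--Vakilian--Wolff only treat $\alpha=1$), so your fallback Steps~1--2 are not needed, and your placement of $\mathcal{O}_2$ in space at time $S$ and $\mathcal{O}_1/(2\tau)$ in frequency matches the statement more directly than the paper's written proof, which ends with $\mathcal{O}_1$ observed at time $S$ and $\mathcal{O}_2$ at time $T$ and so implicitly relies on the symmetry $\alpha\leftrightarrow 1/\alpha$.
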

	
	\textbf{Bibliographical comments.} The observability inequality at two times points \eqref{eq:obstwopoints} has been first investigated in \cite{WWZZ19}. In this paper, the authors obtain that \eqref{eq:obstwopoints} holds for every measurable sets $\mathcal{O}_1$, $\mathcal{O}_2$, with $|\mathcal O_1^c| < +\infty$,  $|\mathcal O_2^c| < +\infty$. Then, other articles have been investigated this notion for different equations, see for instance \cite{LW21}, \cite{HS21}, \cite{WW22}, \cite{WLH23}, \cite{LW25}.
	
	\medskip
	
	Before continuing, we make the following two remarks.
\begin{remark}
		It is interesting to note that \Cref{tm:mainresult1} could be deduced from \Cref{thm:mainresult3} for arbitrary $T>0$, at the cost that the observation set $\mathcal{O}$ depends on $T$. Indeed, fix $T_0>0$ and choose $\alpha=1$. Then, by \Cref{thm:mainresult3}, there exists 
		$\gamma=\gamma(T_0)\in(0,1)$ and a $(\gamma,\rho_\alpha)$-thick set 
		$\mathcal{O}=\mathcal{O}_1=\mathcal{O}_2$ such that for all times $T \ge S+T_0>T_0$, for every $u_0 \in L^2(\R^d)$, 
		$$
		\|u_0\|_{L^2(\R^d)}^2 \leq C \left(\int_{\mathcal{O}_1} |e^{iT \Delta} u_0|^2 dx + \int_{\mathcal{O}_2} |e^{iS \Delta} u_0|^2 dx \right).
$$
By integrating for $S \in (0,T_0)$, taking $T=S+T_0$, we obtain that 
		$$
		\int_{0}^{T_0} \|u_0\|_{L^2(\R^d)}^2 \leq C \left(\int_{0}^{T_0} \int_{\mathcal{O}_1} |e^{i(S+T_0) \Delta} u_0|^2 dx dS +\int_{0}^{T_0} \int_{\mathcal{O}_2} |e^{iS \Delta} u_0|^2 dx dS \right),
		$$
		so 
		$$\|u_0\|_{L^2(\R^d)}^2 \leq C \int_{0}^{2 T_0} \int_{\mathcal{O}} |e^{it \Delta} u_0|^2 dx dt.$$
		This leads to \eqref{eq:obs}.
	\end{remark}
	
	\begin{remark}
	\label{remark:sharptwotimes1}
	In some sense, \Cref{thm:mainresult3} is sharp in the situation where $\mathcal{O}_1$, $\mathcal{O}_2$ are measurable thick sets with respect to the same density $\rho$. Indeed, in this case, \Cref{thm:mainresult3} asserts that a sufficient condition that guarantees \eqref{eq:obstwopoints} to hold is that $\rho(x) = \mathcal O(|x|^{-1})$ as $|x| \to +\infty$. On the reverse side, as we will see below, see \Cref{remark:sharptwotimes}, if $\rho$ is a continuous positive function such that $|x|\rho(x)  \to 0$ then for every $\gamma \in (0,1)$, one could find a $(\gamma, \rho)$-thick set $\mathcal{O}$ such that the inequality \eqref{eq:obstwopoints} fails for $\mathcal{O} = \mathcal{O}_1 = \mathcal{O}_2$.
	\end{remark}
	
\textbf{Strategy of the proof.} The proof of our first two main results \Cref{tm:mainresult1} and \Cref{tm:mainresult2} consists in proving resolvent estimates. Abstract results, see \Cref{prop:Miller}, \Cref{thm:resolv-obs} below, essentially due to Miller \cite{Mil2005} and Su, Sun, Yuan \cite{SSY23}, are then used to deduce \Cref{tm:mainresult1} and \Cref{tm:mainresult2}. The resolvent estimates are obtained through the use of an uncertainty principle for decaying densities due to Kovrijkine \cite{Kovrizhkin2003}, see \Cref{thm:kov2003} below. The proof of our third main result \Cref{thm:mainresult3} is a rather straightforward consequence of Kovrijkine's uncertainty principle and a result due to Wang, Wang, Zhang and Zhang \cite{WWZZ19} establishing a link between uncertainty principle and two times points observability of the Schrödinger equation, see \Cref{0309-sch-eq-twopoints} below.

\bigskip 

\textbf{Acknowledgements.} The authors are grateful to Chenmin Sun for interesting discussions, notably leading to the extension of one of our main results by the adding of some real-valued bounded potential $V$, see \Cref{sec:V} below.

\section{Main ingredients of the proof}

\subsection{Quantitative uncertainty principles for decaying densities}

In this part, we state a quantitative uncertainty principle due to Kovrijkine \cite{Kovrizhkin2003}, strongly inspired by the one of Shubin, Vakilian and Wolff \cite{ShubinVakilianWolff1998}. This ingredient will be the heart of the proof of our main results \Cref{tm:mainresult1}, \Cref{tm:mainresult2} and \Cref{thm:mainresult3}.

\medskip

For a function $f \in L^1(\R^d) \cap L^2(\R^d)$, the Fourier transform of $f$ is defined by
	\begin{equation}
		\label{eq:fourierf}
		\widehat{f}(\xi) = \int_{\R^d} f(x) e^{-2 i \pi \langle x, \xi\rangle} d x,
	\end{equation}
	that can be classically extended to all $f \in L^2(\R^d)$.
	
	\medskip
	
	The following result holds.

\begin{theorem}[\cite{Kovrizhkin2003}]
		\label{thm:kov2003}
		Let $\rho_1=\rho_1(|x|)$, $\rho_2=\rho_2(|x|)$ be two continuous positive radial functions, such that there exist $C_1, C_2 >0$,
		\begin{equation}
			\label{eq:generalkov}
			\frac{C_1}{\rho_1\left(\frac{C_2}{\rho_2(t)}\right)} \geq t\qquad \forall t \geq 0.
		\end{equation}
		Then there exist $\gamma \in (0,1)$, $C = C(d, \gamma) > 0$ such that for every $(\gamma, \rho_1)$-thick set $\mathcal{O}$ and every $( \gamma, \rho_{2})$-thick set $\Omega$, and every $f \in L^2(\mathbb{R}^d)$, 
		\begin{equation}\label{eq:kov}
			\int_{\mathbb{R}^d} |f(x)|^2 \, dx \le C \left( \int_\mathcal{O} |f(x)|^2 \, dx + \int_\Omega |\widehat{f}(\xi)|^2 \, d\xi \right).
		\end{equation}
	\end{theorem}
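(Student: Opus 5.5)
This is Kovrijkine's theorem, so I would follow the Shubin--Vakilian--Wolff strategy. The first step is a reduction: it suffices to prove
\begin{equation*}
\int_{\R^d}|f|^2\le C\int_{\mathcal O}|f|^2\qquad\text{for all } f \text{ with } \|f\|_{L^2(\Omega)}\le \epsilon\|f\|_{L^2(\R^d)},
\end{equation*}
for a small $\epsilon$. Indeed, if $\|\widehat f\|_{L^2(\Omega)}\ge \epsilon \|f\|_2$ the inequality \eqref{eq:kov} is trivial with $C=\epsilon^{-2}$, using Plancherel. So we work with $f$ whose Fourier transform is essentially concentrated on $\Omega^c$, which is ``sparse'' at scale $\rho_2$.

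\textbf{Localisation in frequency.} Cover frequency space by balls $B(\xi,\rho_2(\xi))$, using a Besicovitch/Whitney-type covering, which is admissible since $\rho_2$ is slowly varying on such balls in the relevant regime. On each ball, thickness of $\Omega$ says that $\Omega^c$ misses a fraction $\gamma$ of it. The key local lemma (Shubin--Vakilian--Wolff/Kovrijkine) is the following. If $\widehat f$ is mostly supported in $\Omega^c$, then after smoothing $f$ by convolution with $\check\phi_\xi$ at scale $\rho_2(\xi)^{-1}$ in physical space, the function $f$ is well approximated near a point $x$ by a function whose Fourier transform lives at scale $\rho_2$. In particular, $f$ restricted to a ball of radius $\sim C_2/\rho_2(|\xi|)$ around $x$ behaves like an analytic function of bounded ``degree''. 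One implements this via a Gaussian-type partition of unity in phase space: write $f=\sum_j f_j$ with $\widehat{f_j}$ localized to annuli where $|\xi|\sim t$. Hypothesis \eqref{eq:generalkov} ensures that the physical scale $C_2/\rho_2(t)$ on which $f_j$ is coherent is at most $C_1/\rho_1^{-1}$ evaluated appropriately. Equivalently, $f_j$ is coherent on physical balls of radius comparable to $\rho_1(x)$. This pairing of the two densities is exactly where \eqref{eq:generalkov} enters.

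\textbf{Local Remez/Turán estimate in space.} On each physical ball $B(x,\rho_1(x))$, the approximating function has controlled Bernstein-type derivative bounds at that scale. This holds for all balls except those where $f$ is ``bad'', i.e.\ where its local mass is dominated by high derivatives; the bad balls carry a small fraction of $\|f\|_2^2$ by a summation of Bernstein inequalities. On good balls, Kovrijkine's analytic Remez lemma gives
\begin{equation*}
\|f\|_{L^2(B)}^2\le C(\gamma)\,\|f\|_{L^2(B\cap\mathcal O)}^2 + \text{error},
\end{equation*}
using $|\mathcal O\cap B|\ge\gamma|B|$. Summing over a finite-overlap covering by the balls $B(x,\rho_1(x))$, absorbing the bad-ball mass and the frequency-leakage error (of size $\epsilon$), gives the claim once $\gamma$ is close to $1$ (so that the sets are thick enough) and $\epsilon$ is small.

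\textbf{Main obstacle.} The hardest step is the non-uniform-scale localisation: constructing the phase-space decomposition adapted simultaneously to the variable scales $\rho_1$ in space and $\rho_2$ in frequency, with errors summable in $L^2$. The balls shrink at infinity, so we cannot use a single Bernstein scale. I would try first a dyadic decomposition in $|x|$ and $|\xi|$, treating each dyadic block with a constant-scale (Logvinenko--Sereda/Kovrijkine) estimate at scales $\rho_1(2^k)$ and $\rho_2(2^\ell)$. Condition \eqref{eq:generalkov} guarantees that every relevant pair $(k,\ell)$ is handled either by spatial or by frequency thickness. The choice of $\gamma$ near $1$ is what makes the geometric-series-type error sums converge.
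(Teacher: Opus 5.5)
\textbf{Verdict: there is a genuine gap.} The paper does not prove this statement; it imports it from \cite{Kovrizhkin2003}, so your outline has to stand on its own.

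\textbf{The frequency-side hypothesis is never really used.} The phase-space heuristic in your last paragraph is the right one. But the mechanism you actually describe only exploits the thickness of $\mathcal O$; the thickness of $\Omega$ enters only through the (correct) trivial reduction.

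Your ``key local lemma'' never uses the hypothesis on $\widehat f$: $f*\check{\phi}_\xi$ has spectrum in $B(\xi,\rho_2(\xi))$ for every $f$. Its conclusion, that $f$ itself behaves like a bounded-degree analytic function on balls of radius $C_2/\rho_2$, is false. A spectrum concentrated on the thin set $\Omega^c$ can still be spread over unboundedly many such frequency balls.

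\textbf{Consequently the spatial Remez step breaks down.} Kovrijkine's good/bad-ball argument controls the bad mass by summing Bernstein inequalities. That requires $f$ to be essentially band-limited to $|\xi|\lesssim 1/\rho_1(x)$ near $x$. Near the origin $\rho_1$ is of order one, while $f$ may carry arbitrarily high frequencies. So there every ball can be bad, and nothing in your spatial argument prevents those balls from carrying most of the mass. That this mass is in fact small is exactly what the thinness of $\Omega^c$ must be used to prove, and no step of your proposal does it.

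\textbf{The role of \eqref{eq:generalkov} is misread.} It is not a statement that a coherence scale $C_2/\rho_2(t)$ is comparable to $\rho_1$. Since $C_2/\rho_2(\tau)\to\infty$ (which \eqref{eq:generalkov} itself forces), the intermediate value theorem turns it into: $\rho_1(x)\le C_1/t$ whenever $|x|\ge C_2/\rho_2(t)$, with no monotonicity of the densities needed. It is a condition on \emph{positions}. It lets a uniform-constant spatial Logvinenko--Sereda/Remez argument handle the phase-space region $\{|x|\gtrsim C_2/\rho_2(|\xi|)\}$, and nothing more.

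\textbf{The complementary region needs a frequency-side estimate.} For $\{|x|\lesssim C_2/\rho_2(|\xi|)\}$, truncating $f$ to $B(0,R)$ and running Remez on $\widehat{f\chi_R}$ over $\Omega$ does not work as such. Truncation smears $\widehat f$ at scale $1/R$, comparable to $\rho_2$, so concentration on $\Omega^c$ is not inherited. What does work, for instance: suppose $\widehat g$ is supported in $F_R=\Omega^c\cap\{\rho_2\le C_2/R\}$, and $\chi_R=\chi(\cdot/R)$ with $\widehat\chi$ supported in $B(0,c)$ and $|\chi|\ge 1/2$ on $B(0,1)$. Then Cauchy--Schwarz in $\widehat{\chi_R}*\widehat g$ and a Besicovitch covering of $F_R$ by the balls $B(\eta,\rho_2(\eta))$ give
\[
\|g\|_{L^2(B(0,R))}^2\lesssim \sup_{\xi} R^d\,|F_R\cap B(\xi,c/R)|\,\|g\|_{L^2(\R^d)}^2\lesssim_{d,C_2}(1-\gamma)\,\|g\|_{L^2(\R^d)}^2 .
\]

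\textbf{The gluing is not automatic either.} If you bound each dyadic pair $(k,\ell)$ separately by $O(1-\gamma)$, then at frequency $\xi$ you pay the number of spatial shells inside $B(0,C_2/\rho_2(|\xi|))$, which is unbounded. These per-pair bounds have no decay in $k$, so no fixed choice of $\gamma$ makes the sum converge, contrary to your ``geometric series'' claim. The decomposition must be organized so that errors sum. That means one ball per frequency level rather than one per pair, almost orthogonality across levels, and on the spatial side a localized Bernstein/Remez argument via cutoffs with compactly supported Fourier transform. Your proposal defers exactly this to the ``main obstacle'' paragraph, so it is a heuristic outline rather than a proof.
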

A standard choice of the densities is $\rho_1(x) = 	\min(1, |x|^{-\alpha})$ and $\rho_2(x) =  \min(1, |x|^{-1/\alpha})$ for $\alpha >0$. The case $\alpha=1$ has been initiated by \cite{ShubinVakilianWolff1998}. In this latter case, $\mathcal{O}$ and $\Omega$ are thick sets with respect to the same density $\rho(x) = \min(1, |x|^{-1})$. The decaying assumption is also known to be sharp according to

\begin{remark}
		\label{rmk:sharpwolff}
If $\rho(x)$ is such that $|x| \rho(x) \to +\infty$, then for every $\gamma \in (0,1)$, $N \in \N^*$, there exists a $(\gamma, \rho)$-thick set $\mathcal{O}_N$ and a function $f_N \in L^2(\mathbb{R}^d)$ such that
		\begin{equation}\label{eq:wolffsharp}
			\int_{\mathcal{O}_N} |f_N(x)|^2  dx + \int_{\mathcal{O}_N} |\widehat{f_N}(\xi)|^2  d\xi  \leq \frac{\|f_N\|_{L^2(\R^d)}}{N}.
		\end{equation}
		In particular, the estimate \eqref{eq:kov} cannot hold whatever $\gamma \in (0,1)$ is.
	\end{remark}

The equation \eqref{eq:kov} is called a quantitative uncertainty principle. We recall that roughly speaking, uncertainty principle states that a function and its Fourier transform cannot both be arbitrarily well-localized. A well-known (qualitative) ``uncertainty principle'' proved by Benedicks \cite{Ben85} states that if a function $f \in L^2(\R^d)$ is such that $|\mathrm{supp}(f)| < +\infty$ and $|\mathrm{supp}(\widehat{f})| < +\infty$ then $f \equiv 0$. A quantitative form of this uncertainty principle was given by Amrein, Berthier, see \cite{AB77}. A sharp version was obtained by Nazarov \cite{Nazarov1994} in the one-dimensional case. This result was later extended to the multi-dimensional case by Jaming \cite{Jaming2007}. Another related quantitative uncertainty principle is the theorem of Logvinenko and Sereda \cite{LogvinenkoSereda1974}. It expresses that a band-limited function cannot vanish on a set of positive density. A quantitative dependence on the spectral width $b$, the thickness parameter $\gamma$ in Logvinenko, Sereda's theorem was later obtained by Kovrijkine in \cite{Kovrijkine2001}. This result was finally used by Egidi, Veselic \cite{EV18} and independently by Wang, Wang, Zhang, Zhang \cite{WWZZ19} to prove that the observability in $(0,T)\times \mathcal{O}$ of the heat equation holds if and only if $\mathcal{O}$ is a thick set.

\subsection{Abstract results for the observability of the Schrödinger equation}

	The proof of \Cref{tm:mainresult1} relies crucially on the following criterion of resolvent estimates.
	
	\begin{theorem}[{\cite[Theorem 5.1]{Mil2005}}]\label{prop:Miller}
		Assume that the observability estimate \eqref{eq:obs} holds in $(0,T) \times \mathcal{O}$ for the Schrödinger equation \eqref{eq:SchrodingerObs} then there  
		exist  positive constants $M>0$ and $m>0$ such that
		\begin{equation} \label{eq:resolventschro}
			\| f \|_{L^2(\mathbb{R}^d)}^2 \leq M\|(-\Delta-\lambda)f\|_{L^2(\mathbb{R}^d)}^2 +m \| f \|_{L^2(\mathcal{O})}^2, \qquad \forall f \in L^2(\R^d),\ \forall \lambda \in \R.
		\end{equation}
		Conversely, if the observability resolvent estimate \eqref{eq:resolventschro} hold for some constants $M>0$ and $m>0$, 
		then for all $\epsilon>0$, there is a constant $C_\epsilon>0$ such that the observability estimate \eqref{eq:obs} holds in $(0,T) \times \mathcal{O}$ with $C_{obs}=C_\epsilon mT/(T^2-M(\pi^2+\epsilon))$ for every time $T > \sqrt{M(\pi^2+\epsilon)}$.
	\end{theorem}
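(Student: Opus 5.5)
We prove the two implications of \Cref{prop:Miller} separately, following Miller's original argument.

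\textbf{Direct implication.} Fix $f \in H^2(\R^d)$ and $\lambda \in \R$, and set $g = (-\Delta-\lambda)f$. Write $u(t) = e^{it\Delta}f$. Since $e^{it\Delta}$ commutes with $\Delta$, Duhamel's formula gives
\[
u(t) = e^{-it\lambda}f - i\int_0^t e^{i(t-s)\Delta} e^{-is\lambda} g\,ds .
\]
Consequently $\|u(t) - e^{-it\lambda}f\|_{L^2} \le t\|g\|_{L^2}$. We apply \eqref{eq:obs} to $u_0 = f$ and split $u = e^{-it\lambda}f + r(t)$ on $\mathcal{O}$, where $\|r(t)\|\le t\|g\|$. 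Using $|e^{-it\lambda}|=1$ and integrating in time, this yields
\[
\|f\|^2 \le 2CT\|f\|_{L^2(\mathcal{O})}^2 + \tfrac{2C T^3}{3}\|g\|^2 .
\]
This is \eqref{eq:resolventschro} with $m = 2CT$ and $M = 2CT^3/3$. For $f \in L^2 \setminus H^2$ the right-hand side is $+\infty$, so the estimate holds trivially.

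\textbf{Converse.} This is the substantive direction. Take a cutoff $\chi \in C_c^\infty(0,T)$, to be optimized later; in the limit it will be $\sin(\pi t/T)$, rescaled. Let $u(t) = e^{it\Delta}u_0$ and set $w(t) = \chi(t)u(t)$. Take the Fourier transform in time, $\hat w(\tau) = \int e^{-it\tau} w(t)\,dt$. Because $i\partial_t u = -\Delta u$, we get
\[
(-\Delta - \tau)\hat w(\tau) = -i\,\widehat{\chi' u}(\tau).
\]
Apply \eqref{eq:resolventschro} at $\lambda = \tau$ to $\hat w(\tau)$, integrate in $\tau$, and use Plancherel in time. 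This gives
\[
\int |\chi|^2\|u\|^2\,dt \le M\int |\chi'|^2\|u\|^2\,dt + m\int |\chi|^2\|u\|_{L^2(\mathcal O)}^2\,dt .
\]
By unitarity $\|u(t)\| = \|u_0\|$, so
\[
\|u_0\|^2\left(\|\chi\|_{L^2}^2 - M\|\chi'\|_{L^2}^2\right) \le m\|\chi\|_\infty^2 \int_0^T\!\!\int_{\mathcal O}|u|^2 .
\]

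\textbf{Optimization of $\chi$.} This step is the main obstacle, in the sense that it is where the constants must be pinned down. Choosing $\chi(t) = \sin(\pi t/T)$ gives $\|\chi\|^2 = T/2$ and $\|\chi'\|^2 = \pi^2/(2T)$. The bracket is then positive exactly when $T^2 > M\pi^2$, and the resulting constant is proportional to $mT/(T^2 - M\pi^2)$.

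This sine function is only $H^1_0(0,T)$ and not compactly supported. That is harmless, since Plancherel requires only $w \in L^2$ and $\partial_t w \in L^2(\R;H^{-2})$. To be safe, one can instead approximate it by smooth compactly supported cutoffs. This loses the $\epsilon$ in $\pi^2+\epsilon$ and produces the constant $C_\epsilon$ in the statement.

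Finally, the regularity assumption $u_0 \in H^2$, needed so that $\hat w(\tau) \in H^2$ and \eqref{eq:resolventschro} applies, is removed by density, since both sides of \eqref{eq:obs} are continuous in $L^2$.
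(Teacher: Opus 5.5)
Your proof is correct and is the standard argument behind the cited result; the paper gives no proof of its own, only the reference to Miller, and your route (Duhamel's formula for necessity, then the Burq--Zworski time-cutoff and time-Fourier-transform argument with the optimal cutoff $\sin(\pi t/T)$ for sufficiency) is that argument. There is one harmless sign slip: with $\hat w(\tau)=\int e^{-it\tau}w(t)\,dt$ one actually gets $(-\Delta+\tau)\hat w(\tau)=-i\widehat{\chi' u}(\tau)$, so the resolvent estimate is applied at $\lambda=-\tau$, which is fine since it holds for all real $\lambda$; also, the sine cutoff can be used directly (for $u_0\in H^2$, $w$ is Lipschitz in time with values in $L^2$), giving $C_{obs}=2mT/(T^2-M\pi^2)$ and hence the statement with $C_\epsilon=2$ with no approximation needed.
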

	
The proof of \Cref{tm:mainresult2} mainly follows the strategy of \cite{SSY23}.	We formulate an abstract result, that roughly says that the thickness condition together with high-frequency resolvent estimates lead to observability of the Schr\"{o}dinger equation.
	
	\begin{theorem}\label{thm:resolv-obs}
		Let $\mathcal{O}\subset\R^d$ be a thick set. Assume there exist $m>0,\,\lambda_0>0$ such that
		\begin{equation}\label{equ-resolvent}
			\|f\|_{L^2(\R^d)}^2\le M(\lambda) \|(-\Delta-\lambda)f\|_{L^2(\R^d)}^2+m\|f\|_{L^2(\mathcal{O})}^2,\qquad \forall f\in L^2(\R^d),\ \forall \lambda \geq \lambda_0,
		\end{equation}\label{equ-limit}
		where $M(\lambda)>0$ is decreasing and satisfies the limit condition
		\begin{equation}
		\label{eq:LimM}
			\lim_{\lambda\to\infty}M(\lambda)=0.
		\end{equation}
	Then, there exists $C=C(d,\mathcal{O})$ such for any $T>0$, we have the observability estimate
		\begin{equation}\label{equ-obs-schro}
			\|u_0\|_{L^2(\R^d)}^2\le Ce^{\frac{C}{T^2}}\int_0^T\int_{\mathcal{O}}|e^{it\Delta}u_0|^2dxdt\qquad \forall u_0\in L^2(\R^d).
		\end{equation}
	\end{theorem}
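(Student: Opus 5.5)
We follow the strategy of \cite{SSY23}: split into a high-frequency part, handled through the resolvent estimate \eqref{equ-resolvent}, and a low-frequency part, handled through the thickness of $\mathcal{O}$ via the Logvinenko--Sereda/Kovrijkine spectral inequality. Fix $T\in(0,1]$; larger $T$ follow by monotonicity of the right-hand side of \eqref{equ-obs-schro} in $T$.

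\textbf{High frequencies.} Write $\Pi_{\ge\Lambda}$ and $\Pi_{<\Lambda}$ for the Fourier projectors onto $\{4\pi^2|\xi|^2\ge \Lambda\}$ and its complement. These commute with $e^{it\Delta}$. First we transfer \eqref{equ-resolvent} to data spectrally localized in $[\Lambda,\infty)$, by the semiclassical version of Miller's argument in \Cref{prop:Miller}. Take $\chi\in C_c^\infty(0,1)$ and set $w(t)=\chi(t/T)e^{it\Delta}u_0$. Taking the Fourier transform in $t$ gives $\hat w(\tau)$, which satisfies $(-\Delta-\tau)\hat w = \widehat{(i\partial_t+\Delta)w}$ up to sign conventions, and the source is $T^{-1}\chi'(t/T)e^{it\Delta}u_0$. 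We apply \eqref{equ-resolvent} at each $\tau\ge \lambda_0$ and integrate in $\tau$. On the spectrally localized part, only $\tau\gtrsim\Lambda-O(T^{-1})$ carries non-negligible mass; the tails of $\hat\chi$ decay rapidly and are absorbed. This yields
\[
T\|\Pi_{\ge\Lambda}u_0\|^2\lesssim \frac{M(\Lambda/2)}{T^{2}}\,T\|\Pi_{\ge\Lambda}u_0\|^2+m\int_0^T\|e^{it\Delta}\Pi_{\ge\Lambda}u_0\|_{L^2(\mathcal{O})}^2dt .
\]
By \eqref{eq:LimM}, choosing $\Lambda=\Lambda(T)$ with $M(\Lambda/2)\le cT^2$ absorbs the first term. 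We therefore obtain observability of high frequencies on $(0,T)$ with constant $\lesssim m/T$.

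\textbf{Low frequencies and gluing.} For $g=\Pi_{<\Lambda}u_0$, every $e^{it\Delta}g$ has Fourier support in a ball of radius $\sim\sqrt\Lambda$. Kovrijkine's Logvinenko--Sereda inequality for the thick set $\mathcal{O}$, with constants $(\gamma,a)$, then gives
\[
\|e^{it\Delta}g\|^2\le (C/\gamma)^{C(1+a\sqrt\Lambda)}\|e^{it\Delta}g\|_{L^2(\mathcal{O})}^2 .
\]
Integrating in $t$ and using unitarity, we get $T\|g\|^2\le e^{C\sqrt\Lambda}\int_0^T\|e^{it\Delta}g\|^2_{L^2(\mathcal{O})}$.

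To glue the two pieces, the cross terms $\int_0^T\int_\mathcal{O}$ between $\Pi_{<\Lambda}$ and $\Pi_{\ge\Lambda}$ must be controlled. We handle them by the standard iterative scheme of \cite{SSY23}, or equivalently of Phung / Miller's control-cost method. We use dyadic frequency bands $\Lambda_k$ and time intervals $T_k=T2^{-k}$, and show at each scale an observability estimate for the high part modulo a lower-frequency error. An alternative is a cruder route: apply the resolvent estimate with $M$ replaced by $M(\Lambda)+\Lambda$-dependent terms for the low part, using the Logvinenko--Sereda estimate to treat $\|(-\Delta-\lambda)f\|$ with $f$ spectrally localized near $\lambda$. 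Either way, the resulting cost is $e^{C\sqrt{\Lambda(T)}}$.

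\textbf{Main obstacle.} The key difficulty is bounding $\Lambda(T)$ so that the cost is $e^{C/T^2}$. This requires $\sqrt{\Lambda(T)}\lesssim T^{-2}$, i.e.\ $M(\lambda)\lesssim \lambda^{-1/4}$-type decay. In general \eqref{eq:LimM} gives no rate, so we expect the argument to use the scaling structure instead. For thick sets, the Logvinenko--Sereda constant scales like $e^{Ca\sqrt\Lambda}$. The time-localization at scale $T$ only requires the spectral window to be of width $\sim T^{-1}$ in $\lambda$, i.e.\ $\sqrt\lambda$-width $\sim T^{-1}\Lambda^{-1/2}$. Combining this with unitarity in a covering of $(0,T)$, we would aim to show that only frequencies $\sqrt\Lambda\lesssim T^{-2}$ need the spectral inequality, while those above are absorbed by the resolvent term for $\Lambda\ge$ a fixed threshold. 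This last step, making the threshold independent of $T$ via a parabolic-type scaling, is where we expect most of the effort. It is also where the $T^{-2}$ in the exponent should originate.
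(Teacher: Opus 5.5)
Your high-frequency step is sound and is essentially Step 2 of the paper's proof: Miller's argument on the spectral subspace $\{4\pi^2|\xi|^2\ge\Lambda\}$, where the resolvent estimate holds for every $\tau$ with constant $\max(M(\Lambda/2),4\Lambda^{-2})$. Your Logvinenko--Sereda bound for $g=\Pi_{<\Lambda}u_0$ is also correct. The gap is the gluing, which is the entire difficulty when $T$ is arbitrary. Each of your two estimates observes only its own piece. Replacing $e^{it\Delta}g$ by $e^{it\Delta}u_0$ costs, by unitarity, $2e^{C\sqrt\Lambda}\|\Pi_{\ge\Lambda}u_0\|^2$. The high-frequency estimate in turn only returns $\|\Pi_{\ge\Lambda}u_0\|^2\le\frac{2C}{T}\int_0^T\|e^{it\Delta}u_0\|^2_{L^2(\mathcal O)}dt+2C\|g\|^2$. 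The loop therefore has gain $4Ce^{C\sqrt\Lambda}>1$ and cannot be closed.

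The dyadic scheme you describe (bands $\Lambda_k$, intervals $T2^{-k}$) is the Lebeau--Robbiano telescoping argument for parabolic equations. It works there because high frequencies decay like $e^{-ct\Lambda_k}$ on the passive intervals. The unitary group $e^{it\Delta}$ has no such decay, so each step reintroduces an error of the same size; this is also not the mechanism used in \cite{SSY23}. Your ``cruder route'' fails as well. Logvinenko--Sereda on an energy shell of width $\delta$ gives a resolvent constant of order $\delta^{-2}e^{C\sqrt{\lambda+\delta}}\ge\min_{\delta>0}\delta^{-2}e^{C\sqrt\delta}>0$, which cannot be made $\lesssim T^2$. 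So \Cref{prop:Miller} only yields $T\ge T_0$, which is the proof of \Cref{tm:mainresult1}, not of this statement. Your last paragraph in effect concedes that the argument is not closed.

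The missing idea is a low-frequency estimate that observes the \emph{whole} solution and carries a \emph{small} error. In the paper, thickness is used only once. Through the spectral inequality it yields observability of the heat equation, and then of the inhomogeneous backward heat equation $\partial_tu+(\Delta-1)u=F$, with cost $Ce^{C/T}$. The FBI transform $\mathcal T_h$ in time transfers this to the Schr\"odinger flow. Applied to $U_0=(-\Delta+1)^{-1}u_0$ and $u(t)=e^{it(-\Delta+1)}u_0$, it gives
\[
\|U_0\|^2_{L^2(\R^d)}\le Ch\Big(1+\frac1{T^2}\Big)\|u_0\|^2_{L^2(\R^d)}+Ce^{\frac{2T^2}{h}+\frac CT}\int_0^T\|u(t)\|^2_{L^2(\mathcal O)}dt .
\]
Gluing is then trivial. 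The high-frequency estimate and the triangle inequality give $\|u_0\|^2\le\frac CT\int_0^T\|u(t)\|^2_{L^2(\mathcal O)}dt+(C+1)\|\Pi_\lambda u_0\|^2$, and $\|\Pi_\lambda u_0\|^2\le(\lambda^2+1)^2\|U_0\|^2$. One therefore only needs $C\lambda^4h(1+T^{-2})<\frac12$. With $\lambda\sim 1+T^{-1/2}$ and $h=\varepsilon(1+T^{-4})^{-1}$, the cost is $e^{2T^2/h}\le e^{C/T^2}$ for $T\le1$.

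So the $T^{-2}$ comes from the FBI parameter $h\sim T^4$, not from a spectral-inequality constant $e^{C\sqrt{\Lambda(T)}}$ or from a parabolic scaling. No $T$-dependent Logvinenko--Sereda constant appears anywhere. Your concern about the rate of decay of $M$ is legitimate. In this scheme, however, it enters only through the high-frequency threshold $\lambda$ in the absorption condition, hence through how small $h$ must be.
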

	
 \Cref{thm:resolv-obs} is not explictly stated in \cite{SSY23}, this is why we decide to give a sketch of the proof following \cite{SSY23} of this result in \Cref{sec:appendixa} below. 
 
\medskip 
 
Let us make some comments on Theorem \ref{thm:resolv-obs}. First, we know from \cite{MartinPravdaStarov2020} that the thickness condition on $\mathcal O$ is necessary for the observability \eqref{equ-obs-schro} to hold. This assumption turns out to be equivalent to the following spectral inequality
		\begin{equation}\label{equ-spec-ball}
			\|f\|_{L^2(\R^d)}^2\le e^{C_{spec}(1+\lambda)}\|f\|_{L^2(\mathcal{O})}^2\;\mbox{ for any } f\in L^2(\R^d) \mbox{ with supp }\hat{f}\subset B(0,\lambda),
		\end{equation}
according to \cite{Kovrijkine2001}. Finally, this assumption is also equivalent to the observability of the heat equation in the Euclidean space $\R^d$, i.e.
\begin{equation}
\label{eq:obsheat}
			\|u_0\|_{L^2(\R^d)}^2\le C e^{\frac{C}{T}}\| e^{t \Delta} u_0\|_{L^2(\mathcal{O})}^2\qquad \forall u_0 \in L^2(\R^d),
\end{equation}
according to \cite{EV18} and \cite{WWZZ19}. In our proof of Theorem \ref{thm:resolv-obs}, we will directly use \eqref{eq:obsheat} instead of the thickness condition to obtain \eqref{equ-obs-schro}.

 \medskip
	
The proof of \Cref{thm:mainresult3} is based on the equivalence between uncertainty principles and observability at two-times points of the Schrödinger equation \eqref{eq:SchrodingerObs}.
	
	\begin{theorem}[\cite{WangWangZhang2019}]\label{0309-sch-eq-twopoints}
		Let $A$ and $B$ be two measurable subsets of $\mathbb R^d$. The following propositions are equivalent.
		
(i) There exists a positive constant $C_1(d,A,B)$ so that for each $f\in L^2(\mathbb R^d)$,
		\begin{equation}\label{eq:uncertaintylemma}
			\int_{\mathbb R^d}  |f(x)|^2 dx
			\leq C_1(d,A,B)
			\left( \int_{A}  |f(x)|^2 dx + \int_{B}  |\widehat f(\xi)|^2d \xi\right).
		\end{equation}
		
(ii) There exists a positive constant $C_2\big(n,A,B\big)$ so that for each $T>0$ and each $u_0\in L^2(\mathbb R^d)$,
		\begin{equation}\label{eq:uncertaintylemmaequiv}
			\int_{\mathbb R^d}  |u_0(x)|^2 dx
			\leq C_2(d,A,B)  \Big(\int_{ A}  |u_0(x)|^2 dx
			+ \int_{2TB}  |e^{iT \Delta} u_0|^2 dx\Big).
		\end{equation}

		Furthermore, when one of the above two propositions holds, the constants $C_1(d,A,B)$ and $C_2(d,A,B)$ can be chosen as the same  number.

	\end{theorem}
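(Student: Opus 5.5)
The statement to prove is the equivalence (Wang--Wang--Zhang) between the uncertainty principle \eqref{eq:uncertaintylemma} and the two-set inequality \eqref{eq:uncertaintylemmaequiv}. I will use the explicit factorization of the free Schrödinger propagator. For $T>0$ and $u_0\in L^2(\R^d)$, the kernel formula gives
\begin{equation*}
e^{iT\Delta}u_0(x) = (4\pi i T)^{-d/2} e^{\frac{i|x|^2}{4T}} \int_{\R^d} e^{-\frac{i\langle x,y\rangle}{2T}} e^{\frac{i|y|^2}{4T}} u_0(y)\,dy .
\end{equation*}
Set $f(y) = e^{i|y|^2/(4T)}u_0(y)$. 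Then $|f|=|u_0|$ pointwise. With the convention \eqref{eq:fourierf}, the integral equals $\widehat f\big(x/(4\pi T)\big)$. Hence
\begin{equation*}
|e^{iT\Delta}u_0(x)| = (4\pi T)^{-d/2}\,\big|\widehat f\big(x/(4\pi T)\big)\big| .
\end{equation*}
After the change of variables $x = 4\pi T\xi$, this gives
\begin{equation*}
\int_{4\pi T B}|e^{iT\Delta}u_0|^2dx=\int_B|\widehat f(\xi)|^2d\xi .
\end{equation*}
The Jacobian $(4\pi T)^d$ exactly cancels the prefactor squared, which is just unitarity.

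The dilation factor depends on the Fourier normalization. With \eqref{eq:fourierf} it is $4\pi T$, whereas the statement writes $2TB$. I will check this carefully and adjust the scaling (either the Fourier convention or the dilation of $B$) so that it matches the statement. This is the only delicate point; it is bookkeeping rather than a real obstacle.

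Proof of (i)$\Rightarrow$(ii). Fix $T$ and $u_0$, define $f$ as above, and apply (i) to $f$. Since $\|f\|_{L^2}=\|u_0\|_{L^2}$ and $\int_A|f|^2=\int_A|u_0|^2$, the previous identity turns $\int_B|\widehat f|^2$ into the observation term on the dilated set. This gives (ii) with $C_2=C_1$.

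Proof of (ii)$\Rightarrow$(i). Take any $f\in L^2(\R^d)$, fix any $T>0$ (say $T=1$), and set $u_0(y)=e^{-i|y|^2/(4T)}f(y)$. Then $e^{i|y|^2/(4T)}u_0=f$, and the same identity converts (ii) into (i) with $C_1=C_2$. This also proves that the constants can be chosen equal.

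The factorization holds first for $u_0$ in $L^1\cap L^2$ and then extends to $L^2$ by density and unitarity. I will note this density step as the only technical detail.
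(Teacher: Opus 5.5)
Your argument is correct. The paper gives no proof of this statement and only cites \cite{WangWangZhang2019}; your factorization of $e^{iT\Delta}$ (chirp multiplication, Fourier transform, $L^2$-normalized dilation, extended from $L^1\cap L^2$ by unitarity, with any fixed $T$, e.g. $T=1$, in the converse direction giving $C_1=C_2$) is the standard proof of this equivalence.

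On the normalization you flagged: with \eqref{eq:fourierf} your computation correctly gives the set $4\pi TB$. The $2TB$ in the statement is the dilation that goes with the unitary convention $\widehat f(\xi)=(2\pi)^{-d/2}\int_{\R^d}f(x)e^{-i\langle x,\xi\rangle}dx$. If you keep both \eqref{eq:fourierf} and $2TB$ literally, (ii) becomes equivalent to (i) for the pair $(A,B/(2\pi))$, not $(A,B)$. So you should either switch to the unitary convention or read $2TB$ as $4\pi TB$. This is harmless for the paper's use in the proof of \Cref{thm:mainresult3}, where it only changes the admissible value of $r$.
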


\section{Proofs of the main results}

This part is devoted to the proof of our main results.
	
	\subsection{Resolvent estimates}
	
	In this part, we derive resolvent estimates.
	
	\medskip
	
	In the following, we denote by $C_d'$ a universal positive constant depending only on $d$ and by $\omega_d$ the volume of the unit ball in $\R^d$.
	
	\medskip
	
	The goal of this part is to establish the following result.
	
	\begin{proposition}\label{prop-resolvent}
		Let $\alpha \ge 1$, $\rho_{\alpha}(x)=\min\{1,|x|^{-\alpha}\}$. There exist $\gamma\in(0,1)$, $\lambda_0=(\frac{3^{2+1/\alpha}C_d'}{(1-\gamma)\omega_d}+1)^2$, $c=c(d,\gamma,\alpha)>0$ such that for any $\mathcal{O}\subset\R^d$ measurable $(\gamma,\rho_\alpha)$-thick set, for any $\lambda\ge\lambda_0$, 
		\begin{equation*}
			c\|f\|_{L^2(\R^d)}^2\le M(\lambda) \|(-\Delta-\lambda)f\|_{L^2(\R^d)}^2+\|f\|_{L^2(\mathcal{O})}^2\qquad \forall f\in L^2(\R^d),\ \forall \lambda\ge\lambda_0,
		\end{equation*}
where
\begin{equation}
M(\lambda)=\lambda^{-(1-1/\alpha)}.
\end{equation}
	\end{proposition}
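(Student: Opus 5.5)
The plan is to prove the estimate by applying Kovrijkine's uncertainty principle, \Cref{thm:kov2003}, with $\rho_1=\rho_\alpha$ on the physical side and $\rho_2=\rho_{1/\alpha}(\xi)=\min\{1,|\xi|^{-1/\alpha}\}$ on the Fourier side. The Fourier-side set $\Omega$ will be the complement of a thin annulus around the characteristic sphere $\{4\pi^2|\xi|^2=\lambda\}$ of $-\Delta-\lambda$.

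\textbf{Step 1: compatibility of the densities.} Condition \eqref{eq:generalkov} holds for this pair. For $t\geq 1$ we have
$$\frac{C_1}{\rho_\alpha(C_2 t^{1/\alpha})}=C_1\max\{1,C_2^\alpha t\}\geq t$$
as soon as $C_1C_2^\alpha\geq 1$. For $t\leq 1$, taking $C_1\geq 1$ suffices. So \Cref{thm:kov2003} yields some $\gamma_0\in(0,1)$ and $C>0$ such that
$$\|f\|^2_{L^2}\leq C\left(\|f\|^2_{L^2(\mathcal O)}+\|\widehat f\|^2_{L^2(\Omega)}\right)$$
for every $(\gamma_0,\rho_\alpha)$-thick set $\mathcal O$ and every $(\gamma_0,\rho_{1/\alpha})$-thick set $\Omega$. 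I take $\gamma=\gamma_0$. Since $\mathcal O$ is $(\gamma,\rho_\alpha)$-thick, it only remains to exhibit a suitable $\Omega$.

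\textbf{Step 2: choice of $\Omega$ and the resolvent bound on it.} Set $r_\lambda=\sqrt\lambda/(2\pi)$ and $w=\kappa\lambda^{-1/(2\alpha)}$, with a small constant $\kappa$ to be fixed. Define the annulus and its complement
$$A_\lambda=\{\xi:\ ||\xi|-r_\lambda|<w\},\qquad \Omega=\R^d\setminus A_\lambda.$$
On $\Omega$ we have $|4\pi^2|\xi|^2-\lambda|=4\pi^2\,||\xi|-r_\lambda|\,(|\xi|+r_\lambda)\geq 4\pi^2 w\, r_\lambda$, and this is $\gtrsim \kappa\lambda^{1/2-1/(2\alpha)}$. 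By Plancherel,
$$\|\widehat f\|^2_{L^2(\Omega)}\leq \frac{C}{\kappa^2}\,\lambda^{-(1-1/\alpha)}\,\|(-\Delta-\lambda)f\|^2_{L^2}.$$
Inserting this into Step 1 and dividing by the constant gives the claim with $M(\lambda)=\lambda^{-(1-1/\alpha)}$ and $c=c(d,\gamma,\alpha)$. For $\alpha=1$ we get $M\equiv 1$, which is consistent with the statement.

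\textbf{Step 3: thickness of $\Omega$ (main obstacle).} I must show that $|\Omega\cap B(\xi,r)|\geq\gamma|B(\xi,r)|$ with $r=\rho_{1/\alpha}(\xi)$, uniformly in $\xi$, for all $\lambda\geq\lambda_0$. This is equivalent to $|A_\lambda\cap B(\xi,r)|\leq(1-\gamma)\omega_d r^d$.

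If $B(\xi,r)$ does not meet $A_\lambda$, there is nothing to check. Otherwise $|\xi|\leq r_\lambda+w+1$, and for $\lambda\geq\lambda_0\geq 1$ this gives $|\xi|\leq 3\sqrt\lambda$, hence $r\geq (3\sqrt\lambda)^{-1/\alpha}$. A slab-type estimate bounds the intersection of a shell of width $2w$ with a ball of radius $r\leq 1$ by $C_d' w r^{d-1}$, using that the shell radius is large compared with $r$ when $\lambda$ is large. So it suffices that
$$C_d'\kappa\lambda^{-1/(2\alpha)}\leq (1-\gamma)\,\omega_d\,3^{-1/\alpha}\lambda^{-1/(2\alpha)},$$
which is a condition on $\kappa$ only: choose $\kappa$ proportional to $(1-\gamma)\omega_d/(3^{2+1/\alpha}C_d')$. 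The requirement that $\lambda$ be large enough for the shell geometry and for $w\leq 1$ is what produces the stated $\lambda_0=\bigl(\frac{3^{2+1/\alpha}C_d'}{(1-\gamma)\omega_d}+1\bigr)^2$.

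The delicate point is making the annulus–ball volume bound uniform, including balls centered near the origin where $\rho_{1/\alpha}=1$. I would handle this by comparing the shell locally with a slab and using that $r_\lambda\gg 1$.
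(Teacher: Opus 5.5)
Your proof is correct. It rests on the same three ingredients as the paper: Kovrijkine's principle (\Cref{thm:kov2003}), thinness of an annulus around the characteristic sphere, and a lower bound on the symbol off that annulus. It differs from the paper in two places.

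\emph{Where the principle is applied.} You apply \eqref{eq:kov} directly to $f$ with $\Omega=\R^d\setminus A_\lambda$. The paper applies it to $P_\lambda f$, where the Fourier term vanishes, and then recombines $P_\lambda f$ and $(I-P_\lambda)f$ with a triangle inequality. Your route is shorter.

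\emph{How thinness is obtained.} You keep the Fourier-side density equal to $\rho_{1/\alpha}$ and shrink the annulus to width $\kappa\lambda^{-1/(2\alpha)}$, with $\kappa=\kappa(\gamma)$. The paper keeps the width $\lambda^{-1/(2\alpha)}$ and enlarges the density to $\tilde\rho_{1/\alpha}=\min\{L,L^{1+1/\alpha}|x|^{-1/\alpha}\}$ with $L\sim(1-\gamma)^{-1}$ (\Cref{lem:thinness}). In the paper's ordering, the $\gamma$ supplied by \Cref{thm:kov2003} depends on the pair of densities, hence on $L$, which is itself defined from $\gamma$. The paper does not address this interdependence, and a dilation cannot absorb $L$, because it rescales the physical and Fourier densities in opposite directions. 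Your ordering avoids the issue: fix $\rho_\alpha,\rho_{1/\alpha}$, extract $\gamma$ and $C$ once, then choose $\kappa$. The only cost is a factor $\kappa^{2}$ in $c$. The paper's version has the advantage of a thinness lemma stated in reusable form; it is used again for the fractional case.

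You also keep track of the $2\pi$ from the Fourier normalization (a sphere of radius $\sqrt\lambda/(2\pi)$), which the paper drops.

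\emph{On your Step 3.} The ``delicate point'' you flag is harmless. In the polar-coordinates argument of \Cref{lem:thinness}, each slice $\{\omega:|s\omega-\xi|<r\}$ lies in a cap of radius $2r/s$, whose measure is at most $C_d(2r/s)^{d-1}$ for every radius once $C_d$ is enlarged. This gives
$|A_\lambda\cap B(\xi,r)|\le 2C_d'\,w\,r^{d-1}$
for every ball, including balls near the origin, with no slab comparison and no need for $r_\lambda\gg r$. Consequently your argument works for all $\lambda\ge 1$ (taking $\kappa\le 1$). It therefore holds for the stated $\lambda_0\ge 1$, even though it does not actually produce that particular threshold.
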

	
	The proof of \Cref{prop-resolvent} is postponed to the end of this part.\medskip
	
	In the following, for $\beta > 0$ and $\lambda>0$, define
	\[
	A_{\lambda,\beta} = \{ \xi \in \mathbb{R}^d : \, \big||\xi| - \sqrt{\lambda}\big| \le \lambda^{-\beta/2}\}.
	\]
	Now, we give the thinness property of $A_{\lambda,\beta}$.
	
	\begin{lemma}\label{lem:thinness}
		For every $\varepsilon>0$, we define $L=\frac{3^{1+\beta}C_d'}{\varepsilon \omega_d}$.
		Take
		\[
		\tilde{\rho}(x)=\min\{L,L^{1+\beta}|x|^{-\beta}\}.
		\]
		Then, there exists ${\lambda}_0= (2L+1)^2 >0$ such that,
		\begin{equation*}
			|A_{\lambda,\beta} \cap B(x,\tilde{\rho}(x))| \le \varepsilon |B(x,\tilde{\rho}(x))|
			\qquad \forall x \in \mathbb{R}^d, \,\ \forall \lambda \geq {\lambda}_0.
		\end{equation*}
	\end{lemma}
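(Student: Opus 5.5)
We estimate the measure of the thin spherical shell $A_{\lambda,\beta}$ inside the ball $B(x,\tilde\rho(x))$ directly, splitting according to whether the ball meets the shell at all and, if so, where it sits. Write $r=\tilde\rho(x)$ and $R=\sqrt{\lambda}$, and note that the shell has thickness $2\lambda^{-\beta/2}=2R^{-\beta}$.

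\textbf{Step 1 (trivial case).} If $B(x,r)\cap A_{\lambda,\beta}=\emptyset$, there is nothing to prove. So assume there is some $\xi\in B(x,r)$ with $\big||\xi|-R\big|\le R^{-\beta}$. Since $r\le L$, this gives $\big||x|-R\big|\le L+R^{-\beta}\le L+1$.

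\textbf{Step 2 (location of $x$).} For $\lambda\ge\lambda_0=(2L+1)^2$ we have $R\ge 2L+1$. Hence
\[
|x|\ge R-L-1\ge R/2-1/2\ge R/3,
\]
the last inequality holding because $R\ge 3$ (this requires $L\ge 1$, which we can arrange by enlarging $C_d'$). Likewise $|x|\le R+L+1\le 2R$. Consequently $|x|\ge L$ in this regime, so $r=L^{1+\beta}|x|^{-\beta}\ge L^{1+\beta}(2R)^{-\beta}$. We also have $r\le L^{1+\beta}3^\beta R^{-\beta}$ together with $r\le L$.

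\textbf{Step 3 (volume of the shell inside a ball).} We use the standard geometric bound
\[
|\{\xi: |\xi|\in[R-\delta,R+\delta]\}\cap B(x,r)|\le C_d'\,\delta\, r^{d-1}\qquad\text{for } r\le R/2 .
\]
This holds because the slab of thickness $2\delta$ around a sphere of radius much larger than $r$, intersected with a ball of radius $r$, has measure at most (surface area of the sphere inside a slightly larger ball) $\times\,2\delta$, and the cap area is $\lesssim r^{d-1}$. Applying it with $\delta=R^{-\beta}$ gives a measure $\le C_d' R^{-\beta} r^{d-1}$, so
\[
\frac{|A\cap B(x,r)|}{|B(x,r)|}\le \frac{C_d'R^{-\beta}}{\omega_d r}.
\]
Inserting the lower bound $r\ge L^{1+\beta}2^{-\beta}R^{-\beta}$ from Step 2, this ratio is $\le C_d' 2^\beta/(\omega_d L^{1+\beta})$. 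That is $\le\varepsilon$ by the choice $L=3^{1+\beta}C_d'/(\varepsilon\omega_d)$, since then $L^{1+\beta}\ge L\ge 2^\beta C_d'/(\varepsilon\omega_d)$ once $L\ge1$. The constants $3^{1+\beta}$ in the definition of $L$ suggest that the authors instead use $|x|\le 3R$ or a similar bound; any such version works.

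\textbf{Main obstacle.} The delicate point is the geometric bound in Step 3: it must hold uniformly with a dimensional constant $C_d'$, and it needs $r\le R/2$, which follows from $r\le L\le R/2$. We prove it by slicing in polar coordinates: the measure equals $\int_{R-\delta}^{R+\delta}\mathcal H^{d-1}(\partial B(0,s)\cap B(x,r))\,ds$. Each slice $\partial B(0,s)\cap B(x,r)$ is a spherical cap of a sphere of radius $s\ge r$ contained in a ball of radius $r$, and its area is at most $C_d' r^{d-1}$, for instance by comparing with the projection, or with the area of $\partial B(x,r)$ via the monotonicity of the areas of convex surfaces. Integrating over $s$ gives the factor $2\delta$.
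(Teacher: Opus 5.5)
Your argument is correct and follows essentially the same route as the paper: you slice the shell in polar coordinates, bound each slice by a dimensional constant times $\tilde\rho(x)^{d-1}$, integrate over the thickness $2\lambda^{-\beta/2}$, and use the nonempty intersection to bound $|x|$ by a multiple of $\sqrt{\lambda}$, which gives the needed lower bound on $\tilde\rho(x)$ (your bound $|x|\le 2\sqrt{\lambda}$ is even slightly sharper than the paper's $|x|<3L\sqrt{\lambda}$). One cosmetic slip: $|x|\ge L$ follows from $|x|\ge R-L-1\ge L$, not from $|x|\ge R/3$, which can be smaller than $L$ when $L>1$; in any case the bound $\tilde\rho(x)\ge L^{1+\beta}(2R)^{-\beta}$ follows directly from the definition of $\tilde\rho$, since $2R\ge L$.
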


	\begin{proof}
		We set 
		\[
		a := \lambda^{1/2}-\lambda^{-\beta/2}, 
		\qquad 
		b := \lambda^{1/2}+\lambda^{-\beta/2},
		\]
		so that $A_{\lambda,\beta} = \{\xi \in \mathbb{R}^d : a \le |\xi| \le b\}$.
		
		\medskip
		
\textit{Step 1: Case $|x|\le L$.} For all $\beta>0$,
		$\tilde{\rho}(x) = \min(L, L^{1+\beta}|x|^{-\beta})=L$, so $B(x,\tilde{\rho})\subset B(0,2L)$.
		Since $\lambda\ge{\lambda}_0= (2L+1)^2$, we have $a\ge {\lambda}_0^{1/2}-{\lambda}_0^{-\beta/2}>{\lambda}_0^{1/2}-1=2L$. Thus, $A_{\lambda,\beta}\subset B(0,2L)^c$ and $A_{\lambda,\beta}\cap B(x,\tilde{\rho}(x))=\emptyset$. The claim is trivial.
		
				\medskip
		
	\textit{Step 2: Case $|x| > L$.}
		Here $\tilde{\rho}(x) = L^{1+\beta}|x|^{-\beta} < L$.  
		Denote
		\[
		E := A_{\lambda,\beta} \cap B(x,\tilde\rho).
		\]
		
		We write $\xi = r\omega$, where $r = |\xi| \ge 0$ and $\omega \in \mathbb{S}^{d-1}$. 
		The volume element is $d\xi = r^{d-1} dr \, d\sigma(\omega)$, where $d\sigma$ is the surface measure on $\mathbb{S}^{d-1}$.
		Thus,
		\[
		|E| = \int_a^b \int_{\mathbb{S}^{d-1}} \mathbf{1}_{\{|r\omega - x| < \tilde\rho\}} \, d\sigma(\omega)\, r^{d-1}\,dr.
		\]
		For each $r \in [a,b]$ define
		\[
		\Omega_r := \{\omega \in \mathbb{S}^{d-1} : |r\omega - x| < \tilde\rho\}.
		\]
		Then
		\[
		|E| = \int_a^b \sigma(\Omega_r)\, r^{d-1}\,dr.
		\]
		
		\medskip
		
		Fix $r \in [a,b]$ and suppose $\Omega_r \neq \emptyset$. 
		Pick $\omega_0 \in \Omega_r$. 
		Then for any $\omega \in \Omega_r$,
		\[
		|\omega - \omega_0| 
		\le \frac{|r\omega - r\omega_0|}{r}
		= \frac{|(r\omega - x) - (r\omega_0 - x)|}{r}
		\le \frac{|r\omega - x| + |r\omega_0 - x|}{r}
		< \frac{2\tilde\rho}{r}.
		\]
		Hence $\Omega_r \subset \{\omega \in \mathbb{S}^{d-1} : |\omega - \omega_0| < 2\tilde\rho/r\}$. For small $\delta > 0$ the surface measure of a spherical cap of Euclidean radius $\delta$ on $\mathbb{S}^{d-1}$ is bounded by $C_d \delta^{d-1}$ for some constant $C_d$ depending only on $d$. 
		Therefore,
		\[
		\sigma(\Omega_r) \le C_d \Big(\frac{2\tilde\rho}{r}\Big)^{d-1}
		= C_d' \frac{\tilde\rho^{d-1}}{r^{d-1}},
		\]
		where $C_d' = C_d 2^{d-1}$.
		
\medskip
		
		Substitute this bound into the integral
		\[
		|E| 
		\le \int_a^b C_d' \frac{\tilde\rho^{d-1}}{r^{d-1}}\,r^{d-1}\,dr
		= 2C_d'L^{(1+\beta)(d-1)}\lambda^{-\beta/2}|x|^{-\beta(d-1)}.
		\]
		
		\medskip

		Since the intersection is nonempty, there exists $\xi \in E$ satisfying $|\xi-x|\le \tilde\rho(x)<L$ and $|\xi|\le b<\lambda^{1/2}+1$.
		Then,
		\[
		|x|\le |\xi-x|+|\xi|<L+\lambda^{1/2}+1<3L\lambda^{1/2}.
		\]
		Thus,
		\[
		\frac{|A_{\lambda,\beta} \cap B(x,\tilde{\rho})|}{|B(x,\tilde{\rho})|}=\frac{|E|}{\omega_d(L^{1+\beta}|x|^{-\beta})^d}
		\le \frac{2C_d'}{\omega_d L^{1+\beta}}\lambda^{-\beta/2}|x|^\beta\le \frac{3^{1+\beta}C_d'}{\omega_d}L^{-1}=\varepsilon.
		\]
		This completes the proof.
	\end{proof}
	
		Then we give the proof of Proposition \ref{prop-resolvent}.
	
	\begin{proof}[Proof of Proposition \ref{prop-resolvent}]
		We borrow some ideas from \cite{Gre2020}. Note that, following the proof of \cite[Lemma 1]{Gre2020}, one has the algebraic argument.
		Let $s>0$, there exists $c_s>0$, such that 
		\begin{equation}\label{equ-algebra}
			|\tau^s-\lambda|\ge c_s D(\lambda+D^s)^{1-1/s},
		\end{equation}
		for all $\tau,\lambda\ge 0$ in the region $|\tau-\lambda^{1/s}|>D$.

		By \Cref{lem:thinness}, for every $\gamma\in(0,1)$, $\lambda\ge\lambda_0$, $A_{\lambda,1/\alpha}^c$ is a $(\gamma,\tilde\rho_{1/\alpha})$-thick set, where $\tilde{\rho}_{1/\alpha}=\min\{L,L^{1+1/\alpha}|x|^{-1/\alpha}\}$, $L=\frac{3^{1+1/\alpha}C_d'}{(1-\gamma) \omega_d}$. Then, by calculation, we have
		\begin{equation*}
			\frac{L}{\tilde{\rho}_{1/\alpha}\big(\frac{L}{\rho_\alpha(t)}\big)}\ge t,\;\quad\mbox{for all }t\ge0.
		\end{equation*}
		From Kovrijkine's uncertainty principle stated in \Cref{thm:kov2003}, we obtain that 
		for any $\lambda\ge \lambda_{0}$, $g\in L^2(\R^d)$ with $\mbox{supp }\hat{g}\subset A_{\lambda,1/\alpha}$, 
		\begin{equation}\label{equ-spect}
			\int_{\R^d}|g|^2dx\le C(d,\gamma,L)\int_{\mathcal{O}}|g(x)|^2dx.
		\end{equation}
		
		Denote by $P_\lambda$ the projection $P_\lambda f=\mathcal{F}^{-1}(\mathbbm{1}_{A_{\lambda,1/\alpha}}\mathcal{F}(f))$. Applying \eqref{equ-algebra} with $\tau=|\xi|,\,s=2,\,D=\lambda^{-1/(2\alpha)}$, we obtain that for every $f\in L^2(\R^d)$,
		\begin{align}\label{equ-highfreq}
			\|(-\Delta-\lambda)f\|_{L^2(\R^d)}^2&=\int_{\R^d}\big||\xi|^2-\lambda\big|^2|\hat{f}(\xi)|^2 d\xi\notag\\
			&\ge \int_{A_{\lambda,1/\alpha}^c}\big||\xi|^2-\lambda\big|^2|\hat{f}(\xi)|^2 d\xi\notag\\
			&\ge c_2^2\lambda^{-1/\alpha}(\lambda+\lambda^{-1/\alpha})\int_{A_{\lambda,1/\alpha}^c}|\hat{f}(\xi)|^2 d\xi\notag\\
			&=c_2^2 \lambda^{1-1/\alpha}(1+\lambda^{-(1+1/\alpha)})\|(I-P_\lambda)f\|_{L^2(\R^d)}^2.
		\end{align}
		Then, together \eqref{equ-spect} with \eqref{equ-highfreq}, for any $\lambda\ge \lambda_{0}$ and every $f\in L^2(\R^d)$, we have
		\begin{align*}
			\|f\|_{L^2(\R^d)}^2&=\|P_\lambda f\|_{L^2(\R^d)}^2+\|(I-P_\lambda) f\|_{L^2(\R^d)}^2\\
			&\le C \|P_\lambda f\|_{L^2(\mathcal{O})}^2+\|(I-P_\lambda) f\|_{L^2(\R^d)}^2\\
			&\le 2C \|f\|_{L^2(\mathcal{O})}^2+(2C+1)\|(I-P_\lambda) f\|_{L^2(\R^d)}^2\\
			&\le 2C \|f\|_{L^2(\mathcal{O})}^2+(2C+1)\frac{\lambda^{-(1-1/\alpha)}}{c_2^{2}(1+\lambda^{-(1+1/\alpha)})}	\|(-\Delta-\lambda)f\|_{L^2(\R^d)}^2\\
			&\le 2C \|f\|_{L^2(\mathcal{O})}^2+(2C+1)c_2^{-2}\lambda^{-(1-1/\alpha)}	\|(-\Delta-\lambda)f\|_{L^2(\R^d)}^2.
		\end{align*}
		Thus, the proof is complete.
	\end{proof}

	\subsection{Proofs of the main results} 
	
	In this part, we prove the main results.
	
	\medskip
	
	We start with a technical lemma strongly inspired by \cite[Lemma 5.6]{MPS23}.
	\begin{lemma}\label{lem:comparaisondensity}
		Let $\gamma>0$ and $\rho_1, \rho_2$ be two continuous positive functions satisfying
		\begin{equation}
			\label{eq:comparaisondensity}
			0<\rho_1(x)\le \rho_2(x),\qquad \forall x \in \R^d.
		\end{equation}
		Assume that $\mathcal{O} \subset\mathbb{R}^d$ is a measurable
		subset which is $(\gamma, \rho_1)$-thick.
		Then $\mathcal{O}$ is $(\gamma/9^{d},3 \rho_2)$-thick.
	\end{lemma}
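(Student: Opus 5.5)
The plan is to fix $x\in\R^d$, set $R:=\rho_2(x)$, and prove
\[
|\mathcal{O}\cap B(x,3R)|\ \ge\ \frac{\gamma}{9^d}\,|B(x,3R)|=\gamma\,3^{-d}\,\omega_d R^d=\gamma\,3^{-d}|B(x,R)|.
\]
Since $\rho_1$ and $\rho_2$ are arbitrary continuous functions with $\rho_1\le\rho_2$, the only information available at points $y\neq x$ is $\rho_1(y)$ itself. So I split into two cases according to whether $\rho_1$ exceeds $R$ somewhere on $B(x,R)$. Continuity of $\rho_1$, together with $\rho_1(x)\le\rho_2(x)=R$, lets me treat the case where it does.

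\emph{Case 1: there is $y\in B(x,R)$ with $\rho_1(y)> R$.} Consider the map $t\mapsto\rho_1(x+t(y-x))$ on $[0,1]$. It is continuous, at most $R$ at $t=0$ and larger than $R$ at $t=1$. By the intermediate value theorem there is $z$ on the segment $[x,y]$ with $\rho_1(z)=R$. By convexity $z\in B(x,R)$, so $B(z,\rho_1(z))=B(z,R)\subset B(x,2R)\subset B(x,3R)$. The $(\gamma,\rho_1)$-thickness at $z$ then gives
\[
|\mathcal{O}\cap B(x,3R)|\ge\gamma|B(z,R)|=\gamma\omega_dR^d,
\]
which is stronger than needed.

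\emph{Case 2: $\rho_1(y)\le R$ for every $y\in B(x,R)$.} The balls $B(y,\rho_1(y))$ with $y\in B(x,R)$ cover $B(x,R)$ and are all contained in $B(x,2R)$. Fix a compact $K\subset B(x,R)$. I extract a finite subcover of $K$ and apply the finite Vitali covering lemma, which is the step producing the factor $3^{-d}$. This yields pairwise disjoint balls $B_i=B(y_i,\rho_1(y_i))$ with $K\subset\bigcup_i 3B_i$, hence $\sum_i|B_i|\ge 3^{-d}|K|$. Using disjointness, the inclusion $B_i\subset B(x,2R)$ and thickness at each $y_i$,
\[
|\mathcal{O}\cap B(x,3R)|\ \ge\ \sum_i|\mathcal{O}\cap B_i|\ \ge\ \gamma\sum_i|B_i|\ \ge\ \gamma\,3^{-d}|K|.
\]
By inner regularity I take $|K|\uparrow|B(x,R)|$, which gives the claim.

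The main subtlety is Case 1. Without continuity, a ball $B(y,\rho_1(y))$ centred near $x$ could be far larger than $B(x,3R)$ and give no control on $B(x,3R)$. The intermediate value argument along the segment from $x$ is exactly what rules this out, and it is the only place where continuity of $\rho_1$ is used. The remaining ingredient is the standard Vitali constant $3^{-d}$, which produces precisely the factor $9^{-d}$ once it is combined with the volume ratio $|B(x,R)|/|B(x,3R)|=3^{-d}$.
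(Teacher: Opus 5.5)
Your proof is correct and uses the same two ingredients as the paper's: the intermediate value theorem along the segment from $x$ to handle points where $\rho_1$ is large, and the finite Vitali covering lemma, applied after extracting a finite subcover, to produce the factor $3^{-d}$. The only difference is organizational: the paper folds the large-$\rho_1$ points into a single covering by capping radii at $2\rho_2(x)$, whereas you split off that case and conclude from the single ball $B(z,R)\subset B(x,2R)$; your exhaustion by compact sets $K\subset B(x,R)$ also makes precise the paper's appeal to compactness of an open ball.
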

	
	\begin{proof}
		By hypothesis we have
		\begin{equation}
			\label{eq:thickrho1}
			|\omega\cap B(x,\rho_1(x))|\ge \gamma\,|B(x,\rho_1(x))|,\qquad \forall x\in\mathbb{R}^d.
		\end{equation}
		
		Fix $x\in\mathbb{R}^d$. We first observe the inclusion
		\[
		B\big(x,\rho_2(x)\big)
		\subset
		\bigcup_{\substack{y\in B(x,\rho_2(x))\\ \rho_1(y)\le 2\rho_2(x)}}
		B\big(y,\rho_1(y)\big).
		\]
		Indeed, if $z\in B(x,\rho_2(x))$ and $\rho_1(z)>2\rho_2(x)$,
		consider the continuous function $f(t)=\rho_1(t z+(1-t)x)$ for $t\in[0,1]$. We have
		$f(0)=\rho_1(x)\le \rho_2(x)$ and $f(1)=\rho_1(z)>2\rho_2(x)$,
		so by continuity there exists $t_0\in(0,1)$ such that for $y:=t_0 z+(1-t_0)x$ one has
		$\rho_1(y)=2\rho_2(x)$ and $z\in B(y,\rho_1(y))$. Moreover $y\in B(x,\rho_2(x))$,
		hence the claim.
		
		By compactness there exists a finite family $(x_i)_{0\le i\le N}\subset B(x,\rho_2(x))$
		such that
		\begin{equation}\label{eq:finite-cover}
			B\big(x,\rho_2(x)\big)
			\subset \bigcup_{i=0}^N B\big(x_i,\rho_1(x_i)\big),
			\quad\text{and}\quad \rho(x_i)\le 2\rho_2(x)\ \text{ for all }i.
		\end{equation}
		
		Apply the Vitali covering lemma to the finite family $\{B(x_i,\rho_1(x_i))\}_{i=0}^N$: there exists
		a subset $S\subset\{0,\dots,N\}$ such that the balls $(B(x_i,\rho_1(x_i)))_{i\in S}$ are pairwise disjoint
		and
		\[
		\bigcup_{i=0}^N B(x_i,\rho_1(x_i)) \subset \bigcup_{i\in S} B(x_i,3\rho_1(x_i)).
		\]
		Using \eqref{eq:thickrho1}, \eqref{eq:finite-cover} and the disjointness, we get
		\begin{align*}
			|\omega\cap B(x,3\rho_2(x))|
			&\ge \sum_{i\in S} |\omega\cap B(x_i,\rho_1(x_i))|
			\ge \gamma\sum_{i\in S} |B(x_i,\rho_1(x_i))|\\
			&= \frac{\gamma}{3^d}\sum_{i\in S} |B(x_i,3\rho_1(x_i))|
			\ge \frac{\gamma}{3^d}\Big|\bigcup_{i\in S} B(x_i,3\rho_1(x_i))\Big|\\
			&\ge \frac{\gamma}{3^d}\,|B(x,\rho_2(x))|
			=\frac{\gamma}{9^{d}}\,|B(x,3\rho_2(x))|.
		\end{align*}
		The chain of inclusions/estimates above uses that $B(x,\rho_2(x))\subset \bigcup_{i\in S} B(x_i,3\rho_1(x_i))$
		and the volume scaling $|B(\cdot,3r)|=3^d|B(\cdot,r)|$.
		
		The conclusion then follows.
	\end{proof}

	With this at hand, we then prove \Cref{tm:mainresult1}.
	
	\begin{proof}[Proof of \Cref{tm:mainresult1}]
		Noticing the fact $\rho_{\alpha}(x)\le 1$ for all $x\in\R^d$ and $\alpha>0$, we derive that $\mathcal{O}$ is a $(\gamma/9^d,3)$-thick set from \Cref{lem:comparaisondensity}.
		
		\medskip
		
		\emph{Step 1: Reduction to $\rho_\alpha(x)=\min(1, |x|^{-\alpha})$.} From Lemma \ref{lem:comparaisondensity}, it is not restrictive to assume that $\rho_\alpha(x) = \min(1, |x|^{-\alpha})$. Indeed, by hypothesis, we have that there exists $c>0$ such that
		\begin{equation}
			0 < c \rho_\alpha(x) \leq \frac{\min(1, |x|^{-\alpha})}{3}.
		\end{equation}
		So \eqref{eq:comparaisondensity} is fulfilled, then the $(\gamma, \rho_\alpha)$-thickness of $\mathcal{O}$ directly implies the $(\gamma 9^{-d}, \rho_\alpha(x) =  \min(1, |x|^{-\alpha}))$ thickness of $\mathcal{O}$.
		
				\medskip
		
		\emph{Step 2: Proof of the resolvent estimates for every $\lambda$.}
%
		 By \Cref{prop-resolvent}, there exist $C,m,\lambda_{0}>0$, depending only on $d,\gamma,\alpha$, such that for every $f\in L^2(\R^d)$ and any $\lambda\ge\lambda_0$, we have 
		\begin{equation}\label{equ-resolv-large}
			\|f\|_{L^2(\R^d)}^2\le C\|(-\Delta-\lambda)f\|_{L^2(\R^d)}^2+m\|f\|_{L^2(\mathcal{O})}^2.
		\end{equation}
		For $\lambda<\lambda_0$, 
		if $|\xi|^2<\lambda_0+1$, we get $||\xi|^2-\lambda_0|<2\lambda_0+1$, and 
		\begin{align}\label{equ-lowfreq}
			&\quad\int_{|\xi|^2< \lambda_0+1}||\xi|^2-\lambda_0|^2|\hat{f}|^2d\xi\le (2\lambda_0+1)^2\int_{|\xi|^2< \lambda_0+1}|\hat{f}|^2d\xi\notag\\
			&=(2\lambda_0+1)^2\int_{\R^d}|\mathcal{F}^{-1}\chi_{\{|\xi|^2<\lambda_0+1\}}\mathcal{F}(f)|^2dx\notag\\
			&\le e^{C\lambda_0^{1/2}}\int_{\mathcal{O}}|\mathcal{F}^{-1}\chi_{\{|\xi|^2<\lambda_0+1\}}\mathcal{F}(f)|^2dx\notag\\
			&\le 2e^{C\lambda_0^{1/2}}\big(\int_{\mathcal{O}}|f|^2dx+\int_{\R^d}|\mathcal{F}^{-1}\chi_{\{|\xi|^2\ge\lambda_0+1\}}\mathcal{F}(f)|^2dx\big)\notag\\
			&\le 2e^{C\lambda_0^{1/2}}\big(\int_{\mathcal{O}}|f|^2dx+\int_{|\xi|^2\ge\lambda_0+1}||\xi|^2-\lambda|^2|\hat{f}|^2d\xi\big).
		\end{align}
		In \eqref{equ-lowfreq}, the second inequality follows from Theorem 1 in \cite{Kovrijkine2000} and the fact $\mathcal{O}$ is a thick set; the last inequality follows from $||\xi|^2-\lambda|=|\xi|^2-\lambda\ge\lambda_{0}+1-\lambda>1$ whenever $|\xi|^2 \ge \lambda_0 + 1$ and $\lambda < \lambda_0$.
		Note that $||\xi|^2-\lambda_0|<||\xi|^2-\lambda|$
		if $|\xi|^2\ge\lambda_0>\lambda$. By using \eqref{equ-resolv-large} with $\lambda=\lambda_0$ and \eqref{equ-lowfreq}, we then derive 
		\begin{align*}
			\|f\|_{L^2(\R^d)}^2&\le C\|(-\Delta-\lambda_0)f\|_{L^2(\R^d)}^2+m\|f\|_{L^2(\mathcal{O})}^2\\
			&=C\int_{\R^d}||\xi|^2-\lambda_0|^2|\hat{f}|^2d\xi+m\|f\|_{L^2(\mathcal{O})}^2\\
			&\le C\Big(\int_{|\xi|^2\ge\lambda_0}||\xi|^2-\lambda_0|^2|\hat{f}|^2d\xi+ \int_{|\xi|^2<\lambda_0+1}||\xi|^2-\lambda_0|^2|\hat{f}|^2d\xi\Big)+m\|f\|_{L^2(\mathcal{O})}^2\\
			&\le \big(2Ce^{C\lambda_0^{1/2}}+C\big)\int_{|\xi|^2\ge\lambda_0}||\xi|^2-\lambda|^2|\hat{f}|^2d\xi+ \big(2Ce^{C\lambda_0^{1/2}}+m\big)\|f\|_{L^2(\mathcal{O})}^2\\
			&\le \big(2Ce^{C\lambda_0^{1/2}}+C\big)\|(-\Delta-\lambda_0)f\|_{L^2(\R^d)}^2+ \big(2Ce^{C\lambda_0^{1/2}}+m\big)\|f\|_{L^2(\mathcal{O})}^2.
		\end{align*}
		Therefore, there exist $C,m>0$, depending only on $d,\gamma,\alpha$, such that \eqref{equ-resolv-large} holds for all $\lambda\in\R$. By the abstract result of \Cref{prop:Miller}, we obtain the observability estimate \eqref{eq:obs} for $T>\sqrt{C(\pi^2+\epsilon)}$ with $\epsilon>0$.
	\end{proof}
	
	Now we give the proof of Theorem \ref{tm:mainresult2}.
	\begin{proof}[Proof of Theorem \ref{tm:mainresult2}]
	
		Since $\alpha>1$, it follows from \Cref{prop-resolvent} that there exist $c,\,\lambda_0>0$, depending only on $d,\gamma,\alpha$, such that for any $\lambda\ge\lambda_0$, 
		 \begin{equation*}
		 	c\|f\|_{L^2(\R^d)}^2\le M(\lambda) \|(-\Delta-\lambda)f\|_{L^2(\R^d)}^2+\|f\|_{L^2(\mathcal{O})}^2,\,\mbox{for all }f\in L^2(\R^d),
		 \end{equation*}
		 where $M(\lambda)= C\lambda^{-(1-1/\alpha)}$. Then, since $1-1/\alpha>0$, we have $M(\lambda)$ is decreasing and $M(\lambda)\to 0$ as $\lambda\to\infty$. On the other hand, note that $\mathcal{O}$ is a $(\gamma/{9^d},3)$-thick set. Therefore, applying the abstract result \Cref{thm:resolv-obs}, we conclude the proof.
	\end{proof}

We finally give the proof of Theorem \ref{thm:mainresult3}.

	\begin{proof}[Proof of Theorem \ref{thm:mainresult3}]
		By Theorem \ref{thm:kov2003}, fix $\gamma$ such that for every $(\gamma, \rho_\alpha)$-thick set $A$ and every $( \gamma, \rho_{1/\alpha})$-thick set $B$, and every $f \in L^2(\mathbb{R}^d)$, 
		\begin{equation}\label{eq:kovappBis}
			\int_{\mathbb{R}^d} |f(x)|^2 \, dx \le C \left( \int_A |f(x)|^2 \, dx + \int_B |\widehat{f}(\xi)|^2 \, d\xi \right).
		\end{equation}
		For any $r>0$, the set $r\mathcal{O}_2$ is $(\gamma,\tilde{\rho}_{1/\alpha})$-thick in $\mathbb{R}^d$, where
		$\tilde{\rho}_{1/\alpha}=\min\{r,r^{1+1/\alpha}|x|^{-1/\alpha}\}$. In fact, by the definition of $(\gamma,\rho_{1/\alpha})$-thickness,
		\begin{align*}
			|r\mathcal{O}_2\cap B(x,\tilde{\rho}_{1/\alpha})|=r^d|\mathcal{O}_2\cap B(\frac{x}{r},\frac{\tilde{\rho}_{1/\alpha}(x)}{r})|\ge r^d\gamma|B(\frac{x}{r},\frac{\tilde{\rho}_{1/\alpha}(x)}{r})|=\gamma|B(x,\tilde{\rho}_{1/\alpha})|.
		\end{align*}
		For any $T>S>0$, taking $r=\frac{1}{2(T-S)}$, we have that $A = \mathcal{O}_1$ is $(\gamma, \rho_\alpha)$-thick, and $B= \frac{\mathcal{O}_2}{2(T-S)}=r\mathcal{O}_2$ is $( \gamma, \tilde\rho_{1/\alpha})$-thick.
		A direct computation yields
		\[
		\frac{r}{\tilde{\rho}_{1/\alpha}\big(\frac{r}{\rho_{\alpha}(t)}\big)}\ge t, \quad\mbox{ for all }t\ge 0.
		\]
		Using Theorem \ref{thm:kov2003} again, there exist $\gamma=\gamma(r)\in(0,1),\, C=C(r)>0$ such that
		\begin{equation}\label{eq:kovappBisTer}
			\int_{\mathbb{R}^d} |f(x)|^2 \, dx \le C \left( \int_{\mathcal{O}_1} |f(x)|^2 \, dx + \int_{r{\mathcal{O}}_2} |\widehat{f}(\xi)|^2 \, d\xi \right).
		\end{equation}
		So from Theorem \ref{0309-sch-eq-twopoints} we get (apply at time $T-S>0$),
		\begin{equation}\label{eq:uncertaintylemmaequivapp}
			\int_{\mathbb R^d}  |u_0(x)|^2 dx
			\leq C  \Big(\int_{\mathcal{O}_1}  |u_0(x)|^2 dx
			+ \int_{{\mathcal{O}}_2}  |e^{i(T-S) \Delta} u_0|^2 dx\Big).
		\end{equation}
		Then, we have 
		\begin{equation}\label{eq:uncertaintylemmaequivappBis}
			\int_{\mathbb R^d}  |e^{iS \Delta} u_0|^2 dx
			\leq C  \Big(\int_{\mathcal{O}_1}  |e^{iS \Delta} u_0|^2 dx
			+ \int_{{\mathcal{O}}_2}  |e^{i(T-S) \Delta} e^{iS \Delta} u_0|^2 dx\Big).
		\end{equation}
		Because of the $L^2$-conversation law of the Schr\"{o}dinger equation, the above leads to the inequality \eqref{eq:obstwopoints}. This ends the proof of this theorem.
	\end{proof}
	
	\begin{remark}\label{remark:sharptwotimes}
	To end this part, let us finally mention that the justification of \Cref{remark:sharptwotimes1} directly comes from the proof of of \Cref{thm:mainresult3} and \Cref{rmk:sharpwolff}.
	\end{remark}

	\section{Conclusion and extensions}
	
	In this paper, we derived sufficient conditions for the observability of the Schrödinger equation \eqref{eq:SchrodingerObs}, based on uncertainty principles with respect to decaying densities. These sufficient conditions turn out to be non-sharp according to the one-dimensional case, where it has been proved that the necessary and sufficient condition for the observability of \eqref{eq:SchrodingerObs} is the usual thickness property. In this part, we discuss other situations where the methods of the paper can be applied to prove new results.
	
	\subsection{On the fractional Schrödinger equation}
	
	For $d\geq 1$, $s>1/2$, we are interested in the fractional free Schrödinger equation
	\begin{equation}
		\label{eq:SchrodingerObsfrac}
		\left\{
		\begin{array}{ll}
			i  \partial_t u  - (-  \Delta)^s u = 0 & \text{ in }  (0,+\infty) \times \R^d, \\
			u(0, \cdot) = u_0 & \text{ in } \R^d,
		\end{array}
		\right.
	\end{equation}
	where $u_0 \in L^2(\R^d)$.
	
	\medskip

The observability notion in $(0,T)\times\mathcal O$ is defined as follows
	\be{eq:obsfrac}
			\|u_0\|_{L^2(\R^d)}^2 \leq C \int_0^T \int_{\mathcal{O}} |e^{-it (-\Delta)^s } u_0|^2 dx dt\qquad \forall u_0 \in L^2(\R^d).
			\ee
	
	We have the following generalizations of \Cref{tm:mainresult1} and \Cref{tm:mainresult2}.
	
	\begin{theorem}
	\label{tm:mainresult1Frac} 
	Let $\alpha = \frac{1}{2s-1}$, $\rho : \R^d \to (0,+\infty)$ be a continuous positive function such that 
	\begin{equation}
	\label{eq:cond1Frac}
	\rho(x) = \mathcal{O}(|x|^{-\alpha})\ \text{as}\ |x| \to \infty.
\end{equation}			
There exists $\gamma \in (0,1)$ such that if $\mathcal{O} \subset \R^d$ is a measurable $(\gamma, \rho)$-thick set, then the Schrödinger equation \eqref{eq:SchrodingerObsfrac} is observable in $(0,T)\times\mathcal{O}$ for some time $T>0$. More precisely, there exist $T_0>0$ and $C>0$ such that for every $T \geq T_0$, \eqref{eq:obsfrac} holds.
\end{theorem}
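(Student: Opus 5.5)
We follow the proof of \Cref{tm:mainresult1}, replacing $-\Delta$ by $(-\Delta)^s$, whose symbol is $|\xi|^{2s}$ (up to the $2\pi$ normalization, which only rescales constants). The plan has three steps: a resolvent estimate for $(-\Delta)^s$ valid for all $\lambda\in\R$ with a \emph{uniform} constant $M$, the fractional analogue of Miller's criterion, and the final deduction of observability for $T$ large.

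\emph{Step 0: reduction.} By \Cref{lem:comparaisondensity} we may assume $\rho(x)=\min(1,|x|^{-\alpha})$. The same lemma shows that $\mathcal{O}$ is thick with respect to a constant density.

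\emph{Step 1: choice of the frequency shell.} For $\lambda$ large, consider the shell where $|\xi|^{2s}$ is close to $\lambda$, namely
$$A_\lambda=\{\xi : \big||\xi|-\lambda^{1/(2s)}\big|\le D\}, \qquad D=\lambda^{-\beta/(2s)},$$
with $\beta$ to be chosen. The algebraic inequality \eqref{equ-algebra}, applied with exponent $2s$, gives on $A_\lambda^c$
$$\big||\xi|^{2s}-\lambda\big|\ge c\,D\,\lambda^{1-1/(2s)}=c\,\lambda^{(2s-1-\beta)/(2s)}.$$
To get a uniform constant $M$ we need this lower bound to stay bounded below, so we take $\beta=2s-1$.

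The shell $A_\lambda$ has radius $R=\lambda^{1/(2s)}$ and width $R^{-\beta}$. Adapting \Cref{lem:thinness} to radius $R$ and width $R^{-\beta}$ (the proof is identical), $A_\lambda$ has small density in balls of radius $\tilde\rho(x)=\min(L,L^{1+\beta}|x|^{-\beta})$, uniformly for $\lambda\ge\lambda_0$. So $A_\lambda^c$ is $(\gamma,\tilde\rho)$-thick.

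\emph{Step 2: uncertainty principle.} Kovrijkine's condition \eqref{eq:generalkov} for the pair $\rho_\alpha=|x|^{-\alpha}$, $\tilde\rho\sim|x|^{-\beta}$ requires $\alpha\beta\ge1$. Hence the choice $\alpha=1/(2s-1)$ is exactly the one making the constants uniform. \Cref{thm:kov2003} then gives $\|g\|^2\le C\|g\|_{L^2(\mathcal O)}^2$ whenever $\mathrm{supp}\,\hat g\subset A_\lambda$. Splitting $f=P_\lambda f+(I-P_\lambda)f$ as in the proof of \Cref{prop-resolvent} yields
$$\|f\|^2\le M\|((-\Delta)^s-\lambda)f\|^2+m\|f\|_{L^2(\mathcal O)}^2 \qquad \text{for all } \lambda\ge\lambda_0.$$

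\emph{Step 3: all $\lambda$.} For $\lambda<\lambda_0$, including $\lambda\le0$, we argue as in the proof of \Cref{tm:mainresult1}. We split into low frequencies $|\xi|^{2s}<\lambda_0+1$, handled by Kovrijkine's Logvinenko--Sereda estimate for thick sets, and high frequencies, where $|\xi|^{2s}-\lambda>1$ and $\big||\xi|^{2s}-\lambda_0\big|\le\big||\xi|^{2s}-\lambda\big|$. This gives the resolvent estimate for all $\lambda\in\R$.

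\emph{Step 4: observability.} Miller's criterion (\Cref{prop:Miller}) is abstract: it holds for any selfadjoint operator $A$ and the group $e^{-itA}$. Applying it with $A=(-\Delta)^s$ gives \eqref{eq:obsfrac} for $T>\sqrt{M(\pi^2+\epsilon)}$.

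The main obstacle is Step 1: verifying that the thinness lemma and the algebraic bound \eqref{equ-algebra} scale correctly with exponent $2s$. The requirement $s>1/2$ is needed so that $\beta=2s-1>0$, which is what makes the shell thin and both constants uniform.
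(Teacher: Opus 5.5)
Your proposal is correct and takes essentially the paper's route: the paper also uses the shell $A_{\lambda,\beta,s}=A_{\lambda^{1/s},\beta}$, applies \Cref{lem:thinness} and \Cref{thm:kov2003}, and obtains \Cref{prop-resolventFrac} with $M(\lambda)=\lambda^{-(2-1/s-1/(\alpha s))}$, which for $\alpha=1/(2s-1)$ (your choice $\beta=2s-1=1/\alpha$) is exactly the $\lambda$-independent constant you derive. The steps you spell out, namely the low-frequency argument for $\lambda<\lambda_0$ and Miller's criterion applied to the selfadjoint operator $(-\Delta)^s$, are what the paper leaves to the reader as analogous to the proof of \Cref{tm:mainresult1}.
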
	
\begin{theorem}
\label{tm:mainresult2Frac} 
Let $\alpha >\frac{1}{2s-1}$ and $\rho : \R^d \to (0,+\infty)$ be a continuous positive function such that 
	\begin{equation}
		\label{eq:cond2Frac}
	\rho(x) = \mathcal{O}(|x|^{-\alpha})\ \text{as}\ |x| \to \infty.
\end{equation}	 
There exists $\gamma \in (0,1)$ such that if $\mathcal{O} \subset \R^d$ is a measurable $( \gamma, \rho)$-thick set, then the Schrödinger equation \eqref{eq:SchrodingerObsfrac} is observable in $(0,T)\times\mathcal{O}$ for every time $T>0$. More precisely, for every $T>0$, there exists $C>0$ such that \eqref{eq:obsfrac} holds.	
\end{theorem}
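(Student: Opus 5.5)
We follow the proof of \Cref{tm:mainresult2}, replacing $-\Delta$ by $(-\Delta)^s$, whose Fourier symbol is $|\xi|^{2s}$. First, exactly as in Step 1 of the proof of \Cref{tm:mainresult1}, \Cref{lem:comparaisondensity} lets us assume $\rho(x)=\min(1,|x|^{-\alpha})$ with $\alpha>\frac{1}{2s-1}$. The same lemma shows that $\mathcal{O}$ is $(\gamma/9^d,3)$-thick, which is the input needed by the abstract result. The goal is then a high-frequency resolvent estimate
\[
c\|f\|_{L^2(\R^d)}^2\le M(\lambda)\|((-\Delta)^s-\lambda)f\|_{L^2(\R^d)}^2+\|f\|_{L^2(\mathcal{O})}^2,\qquad \lambda\ge\lambda_0,
\]
with $M(\lambda)$ decreasing and tending to $0$. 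We then apply the fractional analogue of \Cref{thm:resolv-obs}, obtained by running the argument of \cite{SSY23} (sketched in the appendix) with the generator $-i(-\Delta)^s$; observability of the fractional heat equation $e^{-t(-\Delta)^s}$ on thick sets holds for $s>1/2$ via the spectral inequality \eqref{equ-spec-ball}. This gives observability for every $T>0$.

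For the resolvent estimate, set $\mu=\lambda^{1/(2s)}$, so that the characteristic set is the sphere $|\xi|=\mu$. We take a shell of width $D=\mu^{-\beta}$ around it with $\beta=1/\alpha$, and write $A=\{||\xi|-\mu|\le \mu^{-\beta}\}$. Rescaling \Cref{lem:thinness} (the shell there is parametrized by $\lambda$ with $\sqrt\lambda$ replaced by $\mu$), $A^c$ is $(\gamma,\tilde\rho_{1/\alpha})$-thick with $\tilde\rho_{1/\alpha}=\min\{L,L^{1+1/\alpha}|x|^{-1/\alpha}\}$ for all $\mu$ large. As in the proof of \Cref{prop-resolvent}, the compatibility condition \eqref{eq:generalkov} between $\rho_\alpha$ and $\tilde\rho_{1/\alpha}$ holds. \Cref{thm:kov2003} then gives $\|g\|_{L^2(\R^d)}^2\le C\|g\|_{L^2(\mathcal{O})}^2$ for every $g$ with $\operatorname{supp}\hat g\subset A$, with $C$ independent of $\mu$.

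Off the shell we apply the algebraic inequality \eqref{equ-algebra} with exponent $2s$, $\tau=|\xi|$ and $D=\mu^{-1/\alpha}$:
\[
\big||\xi|^{2s}-\lambda\big|\ge c\,D\,\lambda^{1-\frac{1}{2s}}\approx \mu^{2s-1-1/\alpha}.
\]
Hence $\|(I-P)f\|^2\le C\mu^{-2(2s-1-1/\alpha)}\|((-\Delta)^s-\lambda)f\|^2$. Combining this with the spectral bound exactly as at the end of the proof of \Cref{prop-resolvent}, we obtain the resolvent estimate with $M(\lambda)=C\lambda^{-(2s-1-1/\alpha)/s}$. This tends to $0$ precisely when $\alpha>\frac1{2s-1}$, which explains the threshold in the statement; in the borderline case $\alpha=\frac{1}{2s-1}$ it is bounded, which yields \Cref{tm:mainresult1Frac} via \Cref{prop:Miller}.

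The main obstacle is transferring \Cref{thm:resolv-obs} to the fractional setting. In \cite{SSY23} the low frequencies are handled by heat-type smoothing and time-frequency localization adapted to $e^{it\Delta}$. I would first check that the proof only uses three ingredients: unitarity, a spectral inequality for $\operatorname{supp}\hat f\subset B(0,R)$, and the resolvent estimate at high frequencies. If so, the symbol $|\xi|^{2s}$ with $s>1/2$, for which the fractional heat observability cost is $e^{C/T^{1/(2s-1)}}$, should only change the constant's dependence on $T$. This explains why the statement avoids specifying the precise dependence of $C$ on $T$.
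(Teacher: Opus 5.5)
Your proposal follows the paper's route. After reducing to $\rho_\alpha=\min(1,|x|^{-\alpha})$, you place the shell $A_{\lambda^{1/s},1/\alpha}$ (the paper's $A_{\lambda,\beta,s}$) around $|\xi|=\lambda^{1/(2s)}$, handle frequencies inside it with \Cref{lem:thinness} and \Cref{thm:kov2003} and frequencies outside it with \eqref{equ-algebra} at exponent $2s$, and so obtain exactly \Cref{prop-resolventFrac} with $M(\lambda)=\lambda^{-(2-1/s-1/(\alpha s))}$, which tends to $0$ precisely when $\alpha>\frac{1}{2s-1}$. The last step, a fractional analogue of \Cref{thm:resolv-obs} (using observability of $e^{-t(-\Delta)^s}$ on thick sets, valid for $s>1/2$, in place of \eqref{eq:obsheat}), is also left implicit in the paper, and you correctly single it out as the point that still needs checking.
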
	
We deduce from the two above results that as $s \to +\infty$, $\alpha \to 0$, so the required thickness assumption on $\mathcal{O}$ becomes weaker.

\medskip

The main ingredient of the proof is based on the following resolvent estimates.

\begin{proposition}\label{prop-resolventFrac}
		Let $\alpha \ge \frac{1}{2s-1}$, $\rho_{\alpha}(x)=\min\{1,|x|^{-\alpha}\}$. There exist $\gamma\in(0,1)$, $\lambda_{0,s}=(\frac{3^{2+1/\alpha}C_d'}{(1-\gamma)\omega_d}+1)^{2s}$, $c=c(d,\gamma,\alpha)>0$ such that for any $\mathcal{O}\subset\R^d$ measurable $(\gamma,\rho_\alpha)$-thick set, for any $\lambda\ge\lambda_0$, 
		\begin{equation*}
			c\|f\|_{L^2(\R^d)}^2\le M(\lambda) \|((-\Delta)^s-\lambda)f\|_{L^2(\R^d)}^2+\|f\|_{L^2(\mathcal{O})}^2.
		\end{equation*}
where
\begin{equation}
M(\lambda)=\lambda^{-(2-1/s-1/(\alpha s))}.
\end{equation}
	\end{proposition}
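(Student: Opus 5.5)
We follow the proof of \Cref{prop-resolvent}, changing only the width of the spectral annulus. The symbol of $(-\Delta)^s$ is $|\xi|^{2s}$. For $\lambda$ large, its level set $\{|\xi|^{2s}=\lambda\}$ is the sphere of radius $\lambda^{1/(2s)}$. We therefore set $\mu=\lambda^{1/s}$, so that this sphere has radius $\sqrt{\mu}$, and take the annulus $A_{\mu,1/\alpha}=\{\xi:\big||\xi|-\mu^{1/2}\big|\le \mu^{-1/(2\alpha)}\}$. We apply \Cref{lem:thinness} with $\beta=1/\alpha$ and $\varepsilon=1-\gamma$. For $\mu\ge(2L+1)^2$, i.e. $\lambda\ge (2L+1)^{2s}$, this shows that $A_{\mu,1/\alpha}^c$ is $(\gamma,\tilde\rho_{1/\alpha})$-thick, with $\tilde\rho_{1/\alpha}=\min\{L,L^{1+1/\alpha}|x|^{-1/\alpha}\}$ and $L$ as in the proof of \Cref{prop-resolvent}. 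This threshold is the $\lambda_{0,s}$ of the statement. The condition \eqref{eq:generalkov} between $\rho_\alpha$ and $\tilde\rho_{1/\alpha}$ is checked exactly as before and does not involve $s$. \Cref{thm:kov2003} then gives a spectral estimate $\|g\|^2\le C\|g\|_{L^2(\mathcal O)}^2$ for every $g$ with $\mathrm{supp}\,\hat g\subset A_{\mu,1/\alpha}$, with $C$ independent of $\lambda$.

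Next comes the high-frequency bound. We write $f=P f+(I-P)f$, where $P$ is the Fourier projection onto $A_{\mu,1/\alpha}$. On the complement of the annulus we apply the algebraic inequality \eqref{equ-algebra} with exponent $2s$, $\tau=|\xi|$, $D=\mu^{-1/(2\alpha)}=\lambda^{-1/(2\alpha s)}$. Since $\big||\xi|-\lambda^{1/(2s)}\big|>D$ there, this gives
\[
\big||\xi|^{2s}-\lambda\big|\ge c_{2s}\,D\,(\lambda+D^{2s})^{1-1/(2s)}\ge c_{2s}\,\lambda^{-1/(2\alpha s)}\lambda^{1-1/(2s)}.
\]
Squaring,
\[
\|((-\Delta)^s-\lambda)f\|^2\ge c_{2s}^2\,\lambda^{2-1/s-1/(\alpha s)}\|(I-P)f\|^2,
\]
which is exactly the rate $M(\lambda)^{-1}$ of the statement. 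The hypothesis $\alpha\ge 1/(2s-1)$ is equivalent to $2-1/s-1/(\alpha s)\ge0$, so $M$ is bounded, and it tends to $0$ when the inequality is strict; this feeds \Cref{thm:resolv-obs} or \Cref{prop:Miller} afterwards.

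We then combine the two pieces as in the proof of \Cref{prop-resolvent}:
\[
\|f\|^2\le C\|Pf\|_{L^2(\mathcal O)}^2+\|(I-P)f\|^2\le 2C\|f\|_{L^2(\mathcal O)}^2+(2C+1)\|(I-P)f\|^2 .
\]
Inserting the high-frequency bound and dividing by $2C$ gives the claim with $c=1/(2C)$.

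The main obstacle is to make sure that \eqref{equ-algebra} really holds for exponent $2s$ with arbitrary real $s>1/2$, uniformly in $\tau\ge0$. We would verify this through the mean value theorem. Write $\tau=\lambda^{1/(2s)}+h$ with $|h|>D$. Then $|\tau^{2s}-\lambda|\gtrsim |h|\,\max(\tau,\lambda^{1/(2s)})^{2s-1}$ when $2s\ge1$. When $|h|\le\lambda^{1/(2s)}/2$, the maximum is comparable to $\lambda^{1/(2s)}$ and the bound follows directly. When $|h|$ is larger, the left side is already $\gtrsim\lambda$, which dominates. A second, routine point is the reparametrisation $\mu=\lambda^{1/s}$, which must be tracked carefully in \Cref{lem:thinness} so that the constants stay independent of $\lambda$.
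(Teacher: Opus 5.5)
Your proposal is correct and is essentially the paper's argument. The paper likewise rewrites the annulus as $A_{\lambda,1/\alpha,s}=A_{\lambda^{1/s},1/\alpha}$, applies \Cref{lem:thinness} with threshold $\lambda_0^s$, and repeats the proof of \Cref{prop-resolvent} using \eqref{equ-algebra} at exponent $2s$, which it quotes from Green for all exponents. Your direct check of that inequality is fine and needs no case split, since for $2s\ge 1$ one has $|\tau^{2s}-\lambda|\ge|\tau-\lambda^{1/(2s)}|\max(\tau,\lambda^{1/(2s)})^{2s-1}\ge D\lambda^{1-1/(2s)}$.

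Two cosmetic points. The lemma's threshold $(2L+1)^{2s}$ is not the stated $\lambda_{0,s}=(3L+1)^{2s}$; it is smaller, so using the stated one is harmless. To get coefficient $1$ in front of $M(\lambda)$, you should take $c=1/\max\{2C,(2C+1)c_{2s}^{-2}\}$ rather than $1/(2C)$.
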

	
A way to come back to \Cref{prop-resolvent} is first to consider
		\[
		A_{\lambda,\beta,s}=\{\xi\in\R^d:\,||\xi|-\lambda^{\frac1{2s}}|\le \lambda^{-\beta/(2s)}\}.
		\]
Then $A_{\lambda,\beta,s}=A_{\lambda^{1/s},\beta}$.
		By \Cref{lem:thinness}, for every $\varepsilon>0$, there is $\lambda_{0,s}=(\lambda_0)^s$ such that $A_{\lambda,\beta,s}$ is a $(\varepsilon,\tilde\rho_{\beta})$-thin set. Here, $\lambda_{0},\,\tilde\rho_{\beta}=\tilde\rho$ are given in \Cref{lem:thinness}. The rest of the proof is analogous to \Cref{prop-resolvent}, and is left to the reader.

\subsection{On the Schrödinger equation with a real-valued bounded potential}
\label{sec:V}

Let $V \in L^{\infty}(\R^d;\R)$, let us consider the Schrödinger equation
	\begin{equation}
		\label{eq:SchrodingerObsV}
		\left\{
		\begin{array}{ll}
			i  \partial_t u  +  \Delta u - V u = 0 & \text{ in }  (0,+\infty) \times \R^d, \\
			u(0, \cdot) = u_0 & \text{ in } \R^d,
		\end{array}
		\right.
	\end{equation}
	where $u_0 \in L^2(\R^d)$. It is a well-known fact that $i (\Delta-V)$ generates a continuous unitary semigroup in $L^2(\R^d)$ denoted by $(e^{i t (\Delta-V)})_{t \in \R}$. 
	
	\medskip 
	
	We have the following generalization of \Cref{tm:mainresult2}.
	
	\begin{theorem}
\label{tm:mainresult2Bis} 
Let $\alpha >1$ and $\rho : \R^d \to (0,+\infty)$ be a continuous positive function such that 
	\begin{equation}
		\label{eq:cond2Bis}
	\rho(x) = \mathcal{O}(|x|^{-\alpha})\ \text{as}\ |x| \to \infty.
\end{equation}	 
There exists $\gamma \in (0,1)$ such that if $\mathcal{O} \subset \R^d$ is a measurable $( \gamma, \rho)$-thick set, then the Schrödinger equation \eqref{eq:SchrodingerObsV} is observable in $(0,T)\times\mathcal{O}$ for every time $T>0$. More precisely, there exists $C>0$ such that for every $T>0$,
$$\|u_0\|_{L^2(\R^d)}^2 \leq C e^{C/T^2} \int_0^T \int_{\mathcal{O}} |e^{it (\Delta - V) } u_0|^2 dx dt\qquad \forall u_0 \in L^2(\R^d).$$
\end{theorem}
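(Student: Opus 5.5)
The plan is to reduce \Cref{tm:mainresult2Bis} to the abstract criterion \Cref{thm:resolv-obs}, applied with $-\Delta$ replaced by $H_V=-\Delta+V$, which is self-adjoint on $H^2(\R^d)$ since $V\in L^\infty(\R^d;\R)$. First, exactly as in the proof of \Cref{tm:mainresult1}, \Cref{lem:comparaisondensity} lets us assume $\rho(x)=\rho_\alpha(x)=\min(1,|x|^{-\alpha})$. It also shows that $\mathcal{O}$ is a thick set, namely $(\gamma/9^d,3)$-thick.

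Next, we transfer the high-frequency resolvent estimate of \Cref{prop-resolvent} to $H_V$ by treating $V$ as a perturbation. For $\lambda\ge\lambda_0$ and $f\in H^2$,
\[
\|(-\Delta-\lambda)f\|^2\le 2\|(H_V-\lambda)f\|^2+2\|V\|_\infty^2\|f\|^2 .
\]
Inserting this into \Cref{prop-resolvent} gives
\[
c\|f\|^2\le 2M(\lambda)\|(H_V-\lambda)f\|^2+2M(\lambda)\|V\|_\infty^2\|f\|^2+\|f\|^2_{L^2(\mathcal{O})}.
\]
Since $\alpha>1$, we have $M(\lambda)=\lambda^{-(1-1/\alpha)}\to0$. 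Enlarging $\lambda_0$ so that $2M(\lambda)\|V\|_\infty^2\le c/2$ for $\lambda\ge\lambda_0$, we can absorb that term and get
\[
\|f\|^2\le \tfrac{4}{c}M(\lambda)\|(H_V-\lambda)f\|^2+\tfrac{2}{c}\|f\|^2_{L^2(\mathcal{O})},
\]
with a decreasing $\tilde M(\lambda)=\tfrac4c M(\lambda)\to0$. This is hypothesis \eqref{equ-resolvent} for $H_V$.

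The main obstacle is that \Cref{thm:resolv-obs} is stated for $-\Delta$, and its proof (sketched in \Cref{sec:appendixa} after \cite{SSY23}) uses the heat observability \eqref{eq:obsheat} in place of thickness. So we must check that the argument goes through for $H_V$. In \cite{SSY23} the argument splits $u_0$ spectrally. The high-frequency part is handled by the resolvent estimate, through a Miller/Burq–Zworski type argument whose time scale shrinks as $\tilde M(\lambda)\to0$. The low-frequency part is handled by a heat-type or spectral observability, where the cost $e^{C/T^2}$ arises from optimizing the cutoff $\lambda\sim T^{-2}$ against an $e^{C\sqrt\lambda}$ spectral cost. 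The spectral part only uses the functional calculus of a self-adjoint operator, so it transfers verbatim. The input actually needed is therefore a spectral inequality for $H_V$:
\[
\|f\|\le e^{C(1+\sqrt\lambda)}\|f\|_{L^2(\mathcal{O})}\quad\text{for } f\in\mathbf 1_{H_V\le\lambda}L^2,
\]
or equivalently heat observability $\|e^{-TH_V}u_0\|^2\le Ce^{C/T}\|e^{-tH_V}u_0\|_{L^2(\mathcal{O})}^2$ for the heat equation with bounded potential.

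My first attempt for this input is to invoke known results: for bounded potentials on thick sets, spectral inequalities with cost $e^{C(1+\sqrt\lambda)}$ hold by the Lebeau–Robbiano/Carleman approach (Lebeau–Moyano, or Burq–Moyano), and these imply the heat observability. If only a weaker cost such as $e^{C(1+\|V\|_\infty^{2/3}+\sqrt\lambda)}$ is available, it suffices, since $V$ is fixed. Alternatively, one could deduce the low-frequency estimate by a Duhamel perturbation from the free heat equation on short time intervals, using $\|e^{-tH_V}-e^{-t(-\Delta)}\|\le Ct\|V\|_\infty$. This works for the heat semigroup, but the absorption has to be done carefully in the exponentially weighted estimate. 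With these two inputs in place, the argument of \Cref{thm:resolv-obs} yields the stated estimate with cost $Ce^{C/T^2}$.
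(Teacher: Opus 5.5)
Your proposal follows the paper's route. You absorb $V$ into the high-frequency estimate of \Cref{prop-resolvent} using $M(\lambda)\to 0$, which is exactly \Cref{prop-resolventV}, and you then rerun the argument of \Cref{thm:resolv-obs} with $-\Delta+V$ in place of $-\Delta$. The only genuinely new input is a spectral inequality (hence heat observability) for $-\Delta+V$ on thick sets, which the paper takes from \cite[Theorem 2.12]{LM25}.

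Two cautions about that input. First, check that the result you cite covers a merely bounded, non-analytic $V$ and measurable thick sets in $\R^d$. Second, do not rely on the Duhamel fallback. Its error is of size $T\|V\|_\infty\|u_0\|$, which the final state $\|e^{-T(-\Delta+V)}u_0\|$ does not control, and this error is further multiplied by the free cost $e^{C/T}$, so it cannot simply be absorbed.
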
	
	
	A direct application of \Cref{prop-resolvent} in the case $\alpha >1$, observing that $M(\lambda) \to 0$ as $\lambda \to +\infty$ enables to prove
\begin{proposition}\label{prop-resolventV}
		Let $\alpha > 1$, $\rho_{\alpha}(x)=\min\{1,|x|^{-\alpha}\}$. There exist $\gamma\in(0,1)$, $\lambda_0=\lambda_0(d, \gamma, \alpha, \|V\|_{\infty})$, $c=c(d,\gamma,\alpha,|V\|_{\infty})>0$ such that for any $\mathcal{O}\subset\R^d$ measurable $(\gamma,\rho_\alpha)$-thick set, for any $\lambda\ge\lambda_0$, 
		\begin{equation*}
			c\|f\|_{L^2(\R^d)}^2\le M(\lambda) \|(-\Delta-\lambda+V)f\|_{L^2(\R^d)}^2+\|f\|_{L^2(\mathcal{O})}^2\qquad \forall f\in L^2(\R^d),\ \forall \lambda\ge\lambda_0
		\end{equation*}
where
\begin{equation}
M(\lambda)=\lambda^{-(1-1/\alpha)}.
\end{equation}
	\end{proposition}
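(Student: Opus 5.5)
Let $M_0(\lambda)=\lambda^{-(1-1/\alpha)}$ denote the function from \Cref{prop-resolvent}. The plan is a perturbation argument: apply \Cref{prop-resolvent} to $f$, then absorb the potential. Since $V$ is real and bounded, $\|Vf\|_{L^2(\R^d)}\le \|V\|_\infty\|f\|_{L^2(\R^d)}$. The triangle inequality gives
\begin{equation*}
\|(-\Delta-\lambda)f\|_{L^2(\R^d)}^2\le 2\|(-\Delta-\lambda+V)f\|_{L^2(\R^d)}^2+2\|V\|_\infty^2\|f\|_{L^2(\R^d)}^2 .
\end{equation*}
Inserting this into the estimate of \Cref{prop-resolvent} yields, for $\lambda\ge\lambda_0$,
\begin{equation*}
c\|f\|_{L^2(\R^d)}^2\le 2M_0(\lambda)\|(-\Delta-\lambda+V)f\|_{L^2(\R^d)}^2+2\|V\|_\infty^2M_0(\lambda)\|f\|_{L^2(\R^d)}^2+\|f\|_{L^2(\mathcal{O})}^2 .
\end{equation*}

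The key point is the hypothesis $\alpha>1$. It gives $M_0(\lambda)=\lambda^{-(1-1/\alpha)}\to 0$ as $\lambda\to\infty$. We therefore enlarge $\lambda_0$ to some $\lambda_0'\ge\lambda_0$, depending on $d,\gamma,\alpha,\|V\|_\infty$, such that
\begin{equation*}
2\|V\|_\infty^2M_0(\lambda)\le c/2 \qquad \text{for all } \lambda\ge\lambda_0'.
\end{equation*}
The middle term is then absorbed into the left-hand side, giving
\begin{equation*}
\tfrac c2\|f\|_{L^2(\R^d)}^2\le 2M_0(\lambda)\|(-\Delta-\lambda+V)f\|_{L^2(\R^d)}^2+\|f\|_{L^2(\mathcal{O})}^2 .
\end{equation*}
Dividing by $2$ and renaming $c/4$ as the new constant $c$ gives exactly the stated inequality with the same $M(\lambda)=\lambda^{-(1-1/\alpha)}$ and threshold $\lambda_0'$. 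The thickness parameter $\gamma$ is the one from \Cref{prop-resolvent}, so it is unchanged.

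The argument has essentially no obstacle beyond this absorption step. That step is exactly where $\alpha>1$ is needed: for $\alpha=1$ the function $M_0$ is constant and the perturbation $V$ could not be absorbed without a smallness assumption on $\|V\|_\infty$. One should also check that $f$ ranges over the domain $H^2(\R^d)$ (or interpret the norm as $+\infty$ otherwise), which is harmless since $V$ is bounded and so does not change the domain.
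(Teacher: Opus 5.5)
Your proposal is correct and follows the paper's route. The paper also derives this as a direct consequence of \Cref{prop-resolvent}: it absorbs the bounded perturbation $\|Vf\|_{L^2(\R^d)}\le\|V\|_\infty\|f\|_{L^2(\R^d)}$ into the left-hand side once $\lambda$ is large enough that $M(\lambda)=\lambda^{-(1-1/\alpha)}$ is small, which is exactly where $\alpha>1$ is used, and your bookkeeping of the new constant $c/4$, the enlarged threshold depending on $d,\gamma,\alpha,\|V\|_\infty$, and the domain remark are all fine.
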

	This corresponds roughly speaking to a high frequency resolvent estimate. One still needs to prove a low frequency resolvent estimate. Note that previously, it was based on the spectral estimate of Kovrijkine's result from \cite{Kovrijkine2001}. In our case of a perturbation by a potential, one cannot use this result. We rather use the recent spectral estimate of \cite[Theorem 2.12]{LM25} for the operator $-\Delta + V$, when $\mathcal{O}$ is a thick set, that directly gives the observability of the associated parabolic equation. The remaining details of the proof are then left to the reader.
	
	\appendix
	
	\section{Proof of \Cref{thm:resolv-obs}}
	\label{sec:appendixa}
	
	The goal of this part is to prove \Cref{thm:resolv-obs}. Our argument is entirely based on the strategy of \cite{SSY23}, which treats the one-dimensional case. We adapt this strategy to the higher-dimensional setting. For completeness, we provide a sketch of the proof and describe its main steps.
	
	\begin{proof}

		\emph{Step 1: Observability for the inhomogeneous backward heat equation.}
		First, the thickness condition implies the spectral estimate \eqref{equ-spec-ball} so by Lemma 2.3, Lemma 2.4 in \cite{WWZZ19}, we have
		\begin{equation}\label{equ-obs-heat}
			\|e^{T\Delta}u_0\|_{L^2(\R^d)}\le e^{C(C_{spec}+1)^2(1+1/T)}\int_0^T\|e^{t\Delta}u_0\|_{L^2(\mathcal{O})}^2dt.
		\end{equation}
		Then, we consider the $d$-dim inhomogeneous backward heat equation:
		\begin{equation}\label{Equ-inhomo-heat}
			\partial_t u+(\Delta-1)u=F\in L^\infty((0,\infty);L^2(\R^d)),\ u|_{t=T}=u_T\in H^2(\R^d).
		\end{equation}
		Analogous to Corollary 3.12 in \cite{SSY23},
		by \eqref{equ-obs-heat} and reversing the time $t$ to $T-t$, there exists a constant $C>0$, depending only on $d,\,\mathcal{O}$, such that for any $T>0$ and any solution $u$ of \eqref{Equ-inhomo-heat}, we have 
		\begin{equation}\label{equ-obs-inhomo-heat}
			\|u_0\|_{L^2(\R^d)}^2\le Ce^{\frac{C}{T}}\int_{0}^{T}\big(\|(-\Delta+1)u(t)\|_{L^2(\mathcal{O})}^2+\|F(t)\|_{L^2(\R^d)}^2\big)dt.
		\end{equation}
		
		\emph{Step 2: Observability for Schr\"{o}dinger equation in high-frequency part.} 
		By the resolvent estimate \eqref{equ-resolvent} and the properties of $M(\lambda)$, arguing as in the proof of Corollary 4.4 in \cite{SSY23}, one obtains that there exist constants $\lambda_1,\,C>0$, depending only on $d,\mathcal{O}$, such that for any $T>0,\, \lambda>\lambda_1(1+T^{-1/2})$ and $f\in L^2(\R^d)$,
		\begin{equation}\label{equ-obs-high}
			\|(I-\Pi_\lambda)f\|_{L^2(\R^d)}^2\le \frac{C}{T}\int_{0}^{T}\|e^{it(-\Delta+1)}(I-\Pi_\lambda)f\|_{L^2(\mathcal{O})}^2dt.
		\end{equation}
		Here, the spectral projector $\Pi_\lambda=\mathbbm{1}_{\sqrt{-\Delta}\le\lambda}$ is defined by
		\[
		\Pi_\lambda f= \mathcal{F}^{-1}\chi_{\{|\xi|\le\lambda\}}\mathcal{F} f.
		\]
		
		\emph{Step 3: We complete the proof.} Similarly, we introduce the FBI transformation.
	 For $0<h<1,\,z=\tau+is\in\C$ and $L^2(\R^d)$-valued regular function $\Gamma(t;x)$, we define
		\begin{equation*}
			\mathcal{T}_h\Gamma(z;x)=\frac{2^{1/4}}{(2\pi h)^{3/4}}\int_\R e^{-\frac{(z+t)^2}{2h}}\Gamma(t;x)dt.
		\end{equation*}
		Then, similarly to Proposition 5.1 in \cite{SSY23}, using the FBI transform together with standard computations, we derive the following observability estimate based on \eqref{equ-obs-inhomo-heat}:
		\begin{equation}\label{equ-obs-schro2}
			\|f\|_{L^2(\R^d)}^2\le Ch(1+\frac{1}{T^2})\|(-\Delta+1)f\|_{L^2(\R^d)}^2+C e^{\frac{2T^2}{h}+\frac{C}{T}}\int_{0}^{T}\|(-\Delta+1)e^{it(-\Delta+1)}f\|_{L^2(\mathcal{O})}^2dt,
		\end{equation}
		for any $T>0,\,0<h<h_0(1+T^{-3})^{-1}$ and $f\in H^2(\R^d)$.
		
		Next, for any $u_0\in L^2(\R^d)$, we denote $U_0=(-\Delta+1)^{-1}u_0\in H^2(\R^d)$ and $u(t)=e^{it(-\Delta+1)}u_0$.
		By \eqref{equ-obs-schro2}, we have
		\begin{equation}\label{equ-U0}
			\|U_0\|_{L^2(\R^d)}^2\le Ch(1+\frac{1}{T^2})\|u_0\|_{L^2(\R^d)}^2+C e^{\frac{2T^2}{h}+\frac{C}{T}}\int_{0}^{T}\|u(t)\|_{L^2(\mathcal{O})}^2dt,
		\end{equation}
		for any $T>0,\,0<h<h_0(1+T^{-3})^{-1}$. Then, by \eqref{equ-obs-high} and the triangle inequality, we obtain
		\begin{equation*}
			\|u_0\|_{L^2(\R^d)}^2\le \frac{C}{T}\int_{0}^{T}\|u(t)\|_{L^2(\mathcal{O})}^2dt+(C+1)\|\Pi_\lambda u_0\|_{L^2(\R^d)}^2,
		\end{equation*}
		for all $T>0$ and $\lambda>\lambda_1(1+T^{-1/2})$.
		From the definitions of $U_0$ and $\Pi_\lambda$, we obtain
		\begin{equation*}
			\|\Pi_\lambda u_0\|_{L^2(\R^d)}^2=\int_{|\xi|\le\lambda}|\hat{u}|^2d\xi\le (\lambda^2+1)^2\int_{|\xi|\le\lambda}|(|\xi|^2+1)^{-1}\hat{u}(\xi)|^2d\xi\le(\lambda^2+1)^2\|U_0\|_{L^2(\R^d)}^2.
		\end{equation*}
		Thus, we derive that
		\begin{equation*}
			\|u_0\|_{L^2(\R^d)}^2\le \frac{C}{T}\int_{0}^{T}\|u(t)\|_{L^2(\mathcal{O})}^2dt+C\lambda^4\|U_0\|_{L^2(\R^d)}^2.
		\end{equation*} 
		This, together with \eqref{equ-U0}, implies
		\begin{equation*}
			\|u_0\|_{L^2(\R^d)}^2\le C\lambda^4 h(1+\frac{1}{T^2})\|u_0\|_{L^2(\R^d)}^2+\big(C\lambda^4e^{\frac{2T^2}{h}+\frac{C}{T}}+\frac{C}{T}\big)\int_{0}^{T}\|u(t)\|_{L^2(\mathcal{O})}^2dt.
		\end{equation*}
		By taking $\lambda=2\lambda_1(1+T^{-1/2})$ and $h=\varepsilon(1+T^{-4})^{-1}$ with $\varepsilon$ small enough, we complete the proof.
	\end{proof}

	\bibliographystyle{alpha}
	\small{\bibliography{GeometricConditionsSchrodingerV2}}
	
\end{document}